\numberwithin{equation}{section}
\newcommand{\I}{\mathbb I}
\newcommand{\mesh}{\mathrm{mesh}}
\newtheorem{thm}{Theorem}[section]
\newtheorem{pro}[thm]{Proposition}
\newtheorem{cor}[thm]{Corollary}
\newtheorem{qu}[thm]{Question}
\theoremstyle{definition}
\newtheorem{dfn}[thm]{Definition}
\theoremstyle{remark}
\begin{document}

\title[Generalized Cantor manifolds and indecomposable continua]
{Generalized Cantor manifolds and indecomposable continua}

\author{V. Todorov}
\address{Department of Mathematics, UACG, $1$ H. Smirnenski blvd.,
$1046$ Sofia, Bulgaria} \email{vtt-fte@uacg.bg}

\author{V. Valov}
\address{Department of Computer Science and Mathematics,
Nipissing University, 100 College Drive, P.O. Box 5002, North Bay,
ON, P1B 8L7, Canada} \email{veskov@nipissingu.ca}

\date{\today}
\thanks{The second author was partially supported by NSERC
Grant 261914-08.}

 \keywords{Cantor manifold, cohomological dimension,
dimension, hereditarily indecomposable continuum, homogeneous space, indecomposable continuum,
strong Cantor manifold, $V^n$-continuum}

\subjclass[2000]{Primary 54F45; Secondary 54F15}
\begin{abstract}
We review results concerning homogeneous compacta and discuss some open questions.
It is established that indecomposable continua are Alexandroff (resp., Mazurkiewicz, or strong Cantor) manifolds
with respect to the class of all continua. We also provide some new proofs of Bing's theorems about separating
metric compacta by hereditarily indecomposable compacta.
\end{abstract}
\maketitle\markboth{}{Cantor manifolds and indecomposable continua}





\section{Introduction}

Cantor manifolds were introduced by Urysohn \cite{u} as a
generalization of Euclidean manifolds. Recall that a space $X$ is a
{\em Cantor $n$-manifold} if $X$ cannot be separated by a closed
$(n-2)$-dimensional subset. In other words, $X$ cannot be the union
of two proper closed sets whose intersection is of covering
dimension $\leq n-2$. Another specification of Cantor manifolds was
considered by Had\v{z}iivanov \cite{h}: $X$ is a {\em strong Cantor
$n$-manifold} if for arbitrary representation
$X=\bigcup_{i=1}^{\infty}F_i$, where all $F_i$ are proper closed
subsets of $X$, we have $\dim(F_i\cap F_j)\geq n-1$ for some $i\neq
j$.
Had\v{z}iivanov and Todorov \cite{ht} introduced the class of Mazurkiewicz $n$-manifolds, which
is a proper sub-class of the strong Cantor
$n$-manifolds: $X$ is a {\em Mazurkiewicz $n$-manifold} if for any disjoint closed massive subsets
$A$ and $B$ of $X$ (a massive subset of $X$ is a set with non-empty interior in $X$), and any normally placed set $M\subset X$ with
$\dim M\leq n-2$, there exists a continuum in $X\backslash M$ intersecting $A$ and $B$ (equivalently, no such
$M$ is cutting $X$ between $A$ and $B$). Here, $M$ is normally placed in $X$ provided every two
disjoint closed in $M$ sets have disjoint open in $X$ neighborhoods. For example, every $F_\sigma$-subset of a normal space is
normally placed in that space.
The notion of a Mazurkiewicz $n$-manifold has its roots in  the
classical Mazurkiewicz theorem saying that no region in the
Euclidean $n$-space can be cut by a  subset of dimension $\leq
n-2$~\cite{ma} (recall that a set $M\subset X$ does not cut $X$ if for every two points $x,y\in X\backslash M$
there is a continuum $K\subset X\backslash M$ joining $x$ and $y$).

Alexandroff \cite{ps} introduced the stronger
notion of $V^n$-continua: a compactum $X$ is a {\em $V^n$-continuum}
if for every two closed disjoint subsets $X_0$, $X_1$ of $X$, both
having non-empty interior in $X$, there exists an open cover
$\omega$ of $X$ such that there is no partition $P$ in $X$ between
$X_0$ and $X_1$ admitting an $\omega$-map into a space $Y$ with
$\dim Y\leq n-2$ ($f\colon P\to Y$ is said to be an $\omega$-map if there exists
an open cover $\gamma$ of $Y$ such that $f^{-1}(\gamma)$ refines $\omega$).

Obviously, strong Cantor $n$-manifolds are Cantor $n$-manifolds.
Moreover,  every $V^n$-continuum is a Mazurkiewicz $n$-manifold and Mazurkiewicz $n$-manifolds
are strong Cantor $n$-manifolds, see
\cite{ht}. None of the above inclusions is reversible, see
\cite{kktv}.

In the present paper we consider
quite general concepts of the above notions with respect to some classes of
finite or infinite-dimensional  spaces.

Let $\mathcal{C}$ be a class of topological spaces.

\begin{dfn}\label{dfn1}
A space $X$ is an {\em Alexandroff manifold with respect to $\mathcal C$}
(br., {\em Alexandroff $\mathcal C$-manifold}) if for every two closed, disjoint, massive subsets $X_0$, $X_1$ of $X$
there exists an open cover
$\omega$ of $X$ such that there is no partition $P$ in $X$ between
$X_0$ and $X_1$ admitting an $\omega$-map onto a space $Y\in\mathcal C$.
\end{dfn}

\begin{dfn}\label{dfn2}
A space $X$  is said to be a {\em Mazurkiewicz manifold with respect to
$\mathcal{C}$} (br., {\em Mazurkiewicz $\mathcal C$-manifold}) provided for every two closed, disjoint,
massive subsets $X_0,X_1\subset X$,
and every set $F=\bigcup_{i=0}^\infty F_i\subset X$ with each $F_i\in\mathcal{C}$ being proper closed subset of $X$,
there exists a continuum $K$ in $X\setminus F$ joining $X_0$ and $X_1$.

If, under the above conditions, for every open cover $\omega$ of $X$ there exists a set $K_\omega\subset X\backslash F$
joining $X_0$ and $X_1$ such that $K_\omega$ admits an $\omega$-map onto a continuum, $X$ is called a {\em weak Mazurkiewicz $\mathcal C$-manifold}.
\end{dfn}

\begin{dfn}
A space $X$  is a {\em strong Cantor manifold with respect to
$\mathcal{C}$} (br., {\em strong Cantor $\mathcal C$-manifold}) if $X$ can not be represented as
the union
\begin{align}\displaystyle \label{eq1}& X=\bigcup_{i=0}^\infty F_i \quad \text{with}\quad \bigcup_{i\ne j}(F_i\cap F_j)\in \mathcal C,\\
&\text{where all $F_i$ are proper closed subsets of  $X$}.
\notag\end{align}
 \end{dfn}

\begin{dfn}
A space $X$  is a {\em  Cantor manifold with respect to  $\mathcal{C}$} (br., {\em Cantor $\mathcal C$-manifold}) if $X$ cannot be separated by a
closed subset which belongs to $\mathcal{C}$.
\end{dfn}

Obviously, any strong Cantor $\mathcal C$-manifold is a Cantor $\mathcal C$-manifold. Moreover, if the class $\mathcal C$ is hereditary with respect to $F_\sigma$-subsets, then  compact Mazurkiewicz $\mathcal C$-manifolds are strong Cantor $\mathcal C$-manifolds, see \cite{kktv}.

\section{Generalized Cantor manifolds and homogeneous continua}

In this section we discuss some properties of homogeneous continua and ask some questions.

We first introduce the
general dimension function $D_{\mathcal{K}}$ considered in \cite{kktv}, which captures the
covering dimension, cohomological dimension $\dim_G$ with respect to
any Abelian group $G$, as well as the extraordinary dimension
$\dim_L$ with respect to a given $CW$-complex $L$.

A sequence $\mathcal{K}=\{K_0,K_1,..\}$ of
$CW$-complexes is called a {\em stratum} for a dimension theory
\cite{dr} if
\begin{itemize}\item
for each space $X$ admitting a perfect map onto a metrizable space,
$K_n\in AE(X)$ implies both $K_{n+1}\in AE(X\times\I)$ and
$K_{n+j}\in AE(X)$ for all $j\geq 0$.
\end{itemize}
Here, $K_n\in AE(X)$ means that $K_n$ is an absolute extensor for
$X$. Given a stratum $\mathcal{K}$, we can define a dimension
function $D_{\mathcal{K}}$ in a standard way:
\begin{enumerate}
\item
$D_{\mathcal{K}}(X)=-1$ iff $X=\emptyset$;
\item $D_{\mathcal{K}}(X)\le n$ if
$K_n\in AE(X)$ for $n\ge 0$; if $D_{\mathcal{K}}(X)\le n$ and
$K_m\not\in AE(X)$ for all $m<n$, then $D_{\mathcal{K}}(X)= n$;
\item
$D_{\mathcal{K}}(X)=\infty$ if $D_{\mathcal{K}}(X)\le n$ is not
satisfied for any $n$.
\end{enumerate}

 If $\mathcal{K}=\{\mathbb{S}^0,\mathbb{S}^1,..\}$, we
obtain the covering dimension $\dim$. The stratum
$\mathcal{K}=\{\mathbb{S}^0,K(G,1),..,K(G,n),..\}$,  $K(G,n)$,
$n\geq 1$, being the Eilenberg-MacLane complexes for a given group
$G$, determines the cohomological dimension $\dim_G$. Moreover, if
$L$ is a fixed $CW$-complex and
$\mathcal{K}=\{L,\Sigma(L),..,\Sigma^n(L),..\}$, where $\Sigma^n(L)$
denotes the $n$-th iterated suspension of $L$, we obtain the
extraordinary dimension $\dim_L$ introduced  by Shchepin
\cite{es:98} and considered in details by Chigogidze \cite{ch:03}.

According to the countable sum theorem in extension theory, it
follows directly from the above definition that
$D_{\mathcal{K}}(X)\leq n$ implies $D_{\mathcal{K}}(A)\leq n$ for
any $F_{\sigma}$-subset $A\subset X$.

Henceforth, $\mathcal{C}$ will denote one of the four classes:
\begin{itemize}
\item
 the class $\mathcal D_{\mathcal K}^k$ of at most
$k$-dimensional spaces with respect to dimension $D_{\mathcal{K}}$,

\item the class $\mathcal D_{\mathcal{K}}^{<\infty}$ of strongly countable
$D_{\mathcal K}$-dimensional spaces, i.e. all spaces represented as
a countable union of closed finite-dimensional subsets with respect
to $D_{\mathcal{K}}$,

\item
the class $\mathbf C$ of paracompact $C$-spaces,

 and
\item the class $\mathcal {WID}$
of weakly infinite-dimensional spaces.
\end{itemize}

For definitions of a weakly (strongly) infinite-dimensional or a $C$-space, see \cite{re:95}.

It was proved in~\cite{kru} that every homogeneous metrizable,
locally compact, connected space $X$ with the covering dimension
$\dim X=n\le\infty$ is a Cantor $n$-manifold; in case where $X$ is
strongly  infinite-dimensional, it is a
Cantor $\mathcal {WID}$-manifold. Next theorem, established in \cite{kktv},
significantly generalizes  those  results.

\begin{thm}
Every metrizable homogeneous continuum $X\notin\mathcal{C}$ is a
strong Cantor $\mathcal{C}$-manifold  provided that:
 \begin{enumerate}
\item
  $\mathcal{C}$ is any
of the  following three classes:  $\mathcal{WID}$, $\mathbf C$,
$\mathcal D_{\mathcal K}^{n-2}$
 $($in the latter case  we additionally assume
$D_{\mathcal{K}}(X)=n$$)$;\\
or
 \item
$\mathcal C=\mathcal D_{\mathcal K}^{<\infty}$ and   $X$ does not
contain closed subsets of arbitrary large finite dimension
$D_{\mathcal K}$.
\end{enumerate}
 \end{thm}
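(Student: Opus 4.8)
The plan is to argue by contradiction, reducing the statement to the Mazurkiewicz manifold property recorded at the end of the Introduction. First I would record two structural facts about the four admissible classes. Each $\mathcal C$ is closed under homeomorphisms, under countable closed sums, and is hereditary with respect to $F_\sigma$-subsets: for $\mathcal D_{\mathcal K}^{k}$ this heredity is the consequence of the countable sum theorem noted above, and for $\mathbf C$ and $\mathcal{WID}$ it is classical (an $F_\sigma$-subset is a countable union of closed subspaces, each again a $C$-space, resp. weakly infinite-dimensional, and the sum theorem applies), see \cite{re:95}. Consequently, by the implication quoted just before this theorem (\cite{kktv}), it suffices to prove that the compact space $X$ is a Mazurkiewicz $\mathcal C$-manifold; the strong Cantor $\mathcal C$-manifold property then follows automatically.

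The first ingredient is a homogeneity reduction. Since $X\notin\mathcal C$ while $\mathcal C$ is topologically invariant and closed under finite closed sums, no nonempty open $U\subseteq X$ can satisfy $\overline U\in\mathcal C$: otherwise, translating $U$ by homeomorphisms of $X$ and using compactness, finitely many copies $h_1(\overline U),\dots,h_m(\overline U)\in\mathcal C$ would cover $X$ and force $X\in\mathcal C$. In particular every member of $\mathcal C$ contained in $X$ is nowhere dense, and for any $F=\bigcup_i F_i$ with $F_i\in\mathcal C$ the complement $X\setminus F$ is dense.

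Next I would unwind the Mazurkiewicz condition. Fix disjoint closed massive sets $X_0,X_1\subseteq X$ and a set $F=\bigcup_{i=0}^\infty F_i$ with every $F_i\in\mathcal C$ a proper closed subset; by the sum theorem $F\in\mathcal C$. Assume, towards a contradiction, that no continuum in $X\setminus F$ joins $X_0$ and $X_1$, i.e. $F$ cuts $X$ between $X_0$ and $X_1$. Since $F$ is an $F_\sigma$-set it is normally placed in the normal space $X$, so by the standard lemma on cutting in compacta the cut is realized by a closed partition $C\subseteq F$ between $X_0$ and $X_1$. As $C$ is a closed subset of $F\in\mathcal C$, heredity gives $C\in\mathcal C$.

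The heart of the argument, and the step I expect to be the main obstacle, is to contradict $C\in\mathcal C$ by showing that every partition of $X$ between the massive sets $X_0$ and $X_1$ must fail to belong to $\mathcal C$. Here one uses the dimension-theoretic meaning of $X\notin\mathcal C$: for $\mathcal D_{\mathcal K}^{n-2}$ with $D_{\mathcal K}(X)=n$ this is an essential map whose partitions have $D_{\mathcal K}\ge n-1$; for $\mathbf C$ and $\mathcal{WID}$ it is the corresponding essential family of pairs; and in the $\mathcal D_{\mathcal K}^{<\infty}$ case the extra hypothesis that $X$ contains no closed subsets of arbitrarily large finite $D_{\mathcal K}$-dimension is exactly what rules out the degenerate behaviour and supplies such a witness. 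The difficulty is that this witness is a priori attached to some pair produced by the dimension theory, not to our prescribed pair $(X_0,X_1)$. Homogeneity is used precisely to move it into position: since $X_0$ and $X_1$ are massive they contain translated copies of the relevant pieces of the witness, and a finite-cover/partition-of-unity argument assembles the local essential structure into a global one relative to $(X_0,X_1)$. This forces any partition between $X_0$ and $X_1$ to lie outside $\mathcal C$, contradicting $C\in\mathcal C$. Carrying out this transfer uniformly, and verifying the underlying partition/essential-map characterization of non-membership separately in each of the four cases, is where the real work lies.
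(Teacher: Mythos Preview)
Your reduction in the second-to-last paragraph is where the argument breaks. You assert that if an $F_\sigma$-set $F$ cuts $X$ between $X_0$ and $X_1$ (no continuum in $X\setminus F$ joins them), then ``by the standard lemma on cutting in compacta'' there is a closed partition $C\subseteq F$ between $X_0$ and $X_1$. There is no such lemma: this implication is false, and its failure is precisely why Mazurkiewicz $\mathcal C$-manifolds form a strictly smaller class than Cantor $\mathcal C$-manifolds. Concretely, take any compactum that is a Cantor $n$-manifold but not a Mazurkiewicz $n$-manifold (such examples are cited in the introduction). In it some $F_\sigma$-set $F$ with $\dim F\le n-2$ cuts between two massive sets; if $F$ contained a closed partition $C$, then $\dim C\le\dim F\le n-2$ and $C$ would separate, contradicting the Cantor $n$-manifold property. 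So the passage from ``$F$ cuts'' to ``$F$ contains a closed partition'' cannot be made. Without it, your step~5 (even if it could be carried out) would only yield that $X$ is a Cantor $\mathcal C$-manifold, and Cantor $\mathcal C$-manifold does not imply strong Cantor $\mathcal C$-manifold.

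The paper does not spell out a proof of this theorem (it is quoted from \cite{kktv}), but it does indicate the mechanism: the result is \emph{based on} Theorem~2.3, which supplies inside $X$ a compact Mazurkiewicz $\mathcal C$-manifold $M$ (this is exactly where the extra hypothesis in case~(2) is consumed). Homogeneity is then used to move copies of $M$ around $X$, so that for a putative decomposition $X=\bigcup_i F_i$ with $\bigcup_{i\ne j}(F_i\cap F_j)\in\mathcal C$ one finds a translate of $M$ interacting with several $F_i$'s and applies the Mazurkiewicz property of $M$ to reach a contradiction. The key difference from your outline is that the Mazurkiewicz property is first secured on a subcontinuum via Theorem~2.3 and only then transported by homogeneity; you attempt to manufacture it directly on $X$ by the cut-to-partition reduction, and that reduction is the step that does not hold.
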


In case $X$ is locally connected, Theorem 2.1 was generalized in \cite{kv}.

\begin{thm}
Let $X$ be a homogeneous locally compact, locally connected metric space. Suppose
$U$ is a region in $X$ with $U\not\in\mathcal C$, where
$\mathcal C\in\{\mathcal {WID}, \mathbf{C}, \mathcal D_{\mathcal{K}}^{n-2}, D_{\mathcal{K}}^{<\infty}\}$,
$n\geq 1$, and $D_{\mathcal{K}}(U)=n$ in the case $\mathcal C=D_{\mathcal{K}}^{n-2}$. Then
$U$ can not be cut by any set $\bigcup_{i=0}^\infty F_i$ with each $F_i\in\mathcal C$ being closed in $U$.
\end{thm}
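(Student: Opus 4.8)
The plan is to argue by contradiction, reducing the non-cutting assertion to a perturbation problem that I solve by a Baire category argument in a hyperspace of continua; homogeneity enters twice, first to make $M=\bigcup_i F_i$ boundary, and then, through Effros' microtransitivity theorem together with Theorem~2.1, to supply the crucial density step. I begin with a reduction that uses homogeneity decisively. Each of the four classes $\mathcal C$ is invariant under homeomorphisms, hereditary with respect to closed subsets, and satisfies the countable closed sum theorem. Hence no nonempty open $W\subseteq U$ can lie in $\mathcal C$: otherwise, translating $W$ over $U$ by homeomorphisms of $X$ and covering the separable metric space $U$ by countably many closed sets, each contained in a translate of $W$ and so in $\mathcal C$, the sum theorem would give $U\in\mathcal C$, contrary to hypothesis. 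Consequently every $F_i$ is nowhere dense, $M$ is of first category, and $U\setminus M$ is dense. Fixing $a,b\in U\setminus M$, I must produce a continuum in $U\setminus M$ containing both; this is exactly what it means for no set of the prescribed form to cut $U$.

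Next I would set up the complete space in which to run Baire category. Since $X$ is locally compact, locally connected and metric, so is the region $U$, and $U$ is arcwise connected; choosing a neighborhood of an arc from $a$ to $b$ with compact closure in $U$ and passing to the component containing the arc, I obtain a region $V$ with $a,b\in V$ and $\overline V\subseteq U$ compact. Let $\mathcal H$ be the set of subcontinua of $\overline V$ containing $a$ and $b$, with the Hausdorff metric. As a closed subspace of the compact hyperspace of $\overline V$, the space $\mathcal H$ is compact and nonempty, hence a Baire space. For each $i$ set $\mathcal H_i=\{K\in\mathcal H:K\cap F_i=\varnothing\}$; since $F_i\cap\overline V$ is compact, each $\mathcal H_i$ is open in $\mathcal H$. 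If every $\mathcal H_i$ is also dense, then $\bigcap_i\mathcal H_i$ is dense and in particular nonempty, and any $K\in\bigcap_i\mathcal H_i$ is a continuum in $\overline V\setminus M\subseteq U\setminus M$ joining $a$ and $b$, which finishes the proof.

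The heart of the matter, and the step I expect to be the main obstacle, is the density of $\mathcal H_i$: given $K\in\mathcal H$ and $\varepsilon>0$, I must find $K'\in\mathcal H$ with $K'\cap F_i=\varnothing$ and Hausdorff distance from $K$ less than $\varepsilon$. Here Effros' theorem provides, for each $\varepsilon>0$, a $\delta>0$ such that points within $\delta$ can be interchanged by homeomorphisms of $X$ displacing every point by less than $\varepsilon$; covering $K$ by small sets and applying these $\varepsilon$-homeomorphisms, I would push $K$ off $F_i$ in finitely many steps while keeping it connected and still containing $a$ and $b$. That such a rerouting exists is precisely a local strong Cantor (equivalently Cantor) $\mathcal C$-manifold phenomenon for the homogeneous space $X$: a single closed set from $\mathcal C$ must fail to separate small regions, so pieces of $K$ meeting $F_i$ can be detoured around it. This is where Theorem~2.1 is invoked, and in the dimensional case the hypothesis $D_{\mathcal K}(U)=n$ furnishes the dimension gap that makes the perturbation possible.

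The delicate technical points I anticipate are two. First, Theorem~2.1 is stated for homogeneous \emph{continua}, whereas $X$ here need only be locally compact; to apply it I would localize near $K$ via Effros, replacing the global space by a suitable homogeneous-continuum model in which the strong Cantor $\mathcal C$-manifold property is available, and this localization is the most demanding part of the argument. Second, I must ensure that the finitely many successive detours never reconnect onto $F_i$ nor disconnect the set, which requires carrying out the pushes with uniform control of the Hausdorff displacement and with overlaps chosen so that connectedness and the containment of $a,b$ are preserved at each stage. Once density of every $\mathcal H_i$ is secured, the Baire category theorem upgrades ``avoiding each $F_i$'' to ``avoiding $M$,'' completing the argument.
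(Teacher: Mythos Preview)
The paper does not itself prove this theorem; it is quoted from \cite{kv}, and the only information given about the argument is that it, like Theorem~2.1, is ``based on'' Theorem~2.3 --- the existence of a compact Mazurkiewicz $\mathcal C$-manifold inside any compactum not in $\mathcal C$. That already signals a divergence from your plan, which leans on Theorem~2.1 instead.

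Your opening reduction is fine: homogeneity together with the countable closed sum theorem does force every nonempty open subset of $U$ out of $\mathcal C$, so each $F_i$ is nowhere dense and it suffices to join two fixed points $a,b\in U\setminus M$ by a continuum. The hyperspace setup is also legitimate. The real gap is the density step. You propose to invoke Theorem~2.1 to conclude that a single closed set from $\mathcal C$ cannot separate small regions, but Theorem~2.1 concerns homogeneous \emph{continua}, and your suggested fix --- ``localize near $K$ via Effros, replacing the global space by a suitable homogeneous-continuum model'' --- is not an argument: Effros' theorem supplies small homeomorphisms of $X$, not a compact homogeneous model of a neighborhood, and small closed neighborhoods in $X$ have no reason to be homogeneous continua. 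Even granting the conclusion of Theorem~2.1 locally, the strong Cantor $\mathcal C$-manifold property only obstructs certain decompositions; it does not hand you continua in the complement of $F_i$ joining prescribed points, which is exactly what your detour construction needs.

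What Theorem~2.3 contributes, and what the proof in \cite{kv} presumably exploits, is a \emph{portable} object: a compact Mazurkiewicz $\mathcal C$-manifold $Y\subset U$ (extracted from any compact piece of $U$ outside $\mathcal C$). Effros' theorem then lets one place translates $h_j(Y)$ through any prescribed points, so $a$ and $b$ are linked by a finite chain of overlapping copies of $Y$ inside $U$. The Mazurkiewicz property of each copy yields, directly, a subcontinuum of $h_j(Y)\setminus M$ --- avoiding all of $M$ at once, not merely one $F_i$ --- joining suitable massive sets, and concatenating these subcontinua gives the desired continuum in $U\setminus M$. With this ingredient the hyperspace Baire argument becomes superfluous, and the Mazurkiewicz property (rather than the weaker Cantor or strong Cantor property you aimed for) is precisely what makes the join-while-avoiding step go through.
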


Theorems 2.1 and 2.2 are based on the following results from \cite{kktv}:

\begin{thm}
Let $X$ be a compact space.
\begin{enumerate}
\item If $D_{\mathcal{K}}(X)=n$, then $X$ contains a closed
subset $M$ such that $D_{\mathcal{K}}(M)=n$ and $M$ is both Alexandroff
and a Mazurkiewicz manifold with respect to the
class $\mathcal D_{\mathcal{K}}^{n-2}$;
\item If $D_{\mathcal{K}}(X)=\infty$,
then either $X$ contains closed subsets of arbitrary large finite
dimension $D_{\mathcal{K}}$ or $X$ contains a compact Mazurkiewicz
$\mathcal D_{\mathcal{K}}^{<\infty}$-manifold;
\item If $X\not\in\mathbf C$, then it contains a compact Mazurkiewicz $\mathbf C$-manifold;
\item If $X$ is metrizable and strongly infinite-dimensional, then it contains a
compact Mazurkiewicz $\mathcal {WID}$-manifold.
\end{enumerate}
\end{thm}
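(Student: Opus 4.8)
The plan is to prove all four statements by a single template: reduce, via a compactness (Brouwer\nobreakdash-type) argument, to a closed subset $M\subseteq X$ that is \emph{irreducible} with respect to a suitable witness of $X\notin\mathcal C$, and then show that irreducibility forces the Mazurkiewicz (and, in case (1), the Alexandroff) manifold property. The decisive point is the choice of witness. One must \emph{not} reduce to a subset that is merely dimension-minimal, because the conditions ``$D_{\mathcal K}=n$'', ``$\notin\mathbf C$'', ``$\notin\mathcal{WID}$'' are not stable under decreasing intersections of compacta (the rectangles $\I\times[0,1/k]$ shrink to a segment, dropping dimension). Instead one fixes a \emph{limit-stable} obstruction and minimizes its carrier.

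First I would set up the witnesses. For (1), since $D_{\mathcal K}(X)=n$, choose a closed $A\subseteq X$ and a map $f\colon A\to K_{n-1}$ that does not extend over $X$. For (2)--(4) choose the appropriate \emph{essential family} of pairs of disjoint closed sets detecting $X\notin\mathcal C$ (an infinite essential family for $X\notin\mathbf C$ and for strongly infinite-dimensional $X$; a sequence of essential families of unbounded order obstructing a countable decomposition into finite-dimensional closed pieces when $\mathcal C=\mathcal D_{\mathcal K}^{<\infty}$). The key lemma to isolate is that each witness is preserved under intersections of decreasing transfinite chains of compacta: non-extendability of a fixed $f$ into a fixed $CW$-complex $K_{n-1}$ passes to the limit, and an essential family of fixed carriers stays essential in the intersection. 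Granting this, Zorn's Lemma yields a closed $M$ on which the witness survives but which is minimal with this property; hence the witness fails on every proper closed subset of $M$ containing the carrier (so $f$ extends over it in case (1), and the essential family becomes inessential on it in cases (2)--(4)), while $D_{\mathcal K}(M)=n$ in case (1).

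Next I would deduce the manifold property from minimality. Given disjoint massive closed $X_0,X_1\subseteq M$ and a closed separator $P\in\mathcal C$ splitting $M=M_0\cup M_1$ with $M_0\cap M_1=P$, minimality makes the witness fail on the proper pieces: in case (1) $f$ extends over each $M_i$, and in cases (2)--(4) the essential family admits, on each $M_i$, partitions with empty intersection. Because $P\in\mathcal C$ carries enough slack ($K_{n-1}\in AE(P)$, since $D_{\mathcal K}(P)\le n-2$ gives $K_{n-2}\in AE(P)$ and hence $K_{n-1}\in AE(P)$ by the stratum property; or, in the infinite-dimensional cases, $P$ supports no essential family of the critical order), the two partial solutions splice across $P$ into a global one---an extension of $f$ over $M$, respectively a global system of partitions with empty intersection. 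This contradicts the survival of the witness on $M$, proving $M$ is a Cantor $\mathcal C$-manifold. The Alexandroff refinement in (1) replaces the honest separator by a partition $\omega$-mapped onto a space of $D_{\mathcal K}\le n-2$; here I would argue by contradiction, let $\omega$ run through a null sequence of covers, take a Hausdorff limit of the resulting partitions, and invoke the limit theorem for $D_{\mathcal K}$ to produce a genuine separator in $\mathcal D_{\mathcal K}^{n-2}$, reducing to the case just treated.

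The Mazurkiewicz property is where the main obstacle lies, because the separating object is now an $F_\sigma$ set $F=\bigcup_i F_i$ rather than a closed set, and ``$F$ cuts $M$ between $X_0$ and $X_1$'' hands us no clean closed seam to glue along. The plan is to argue contrapositively: assuming no continuum in $M\setminus F$ joins $X_0$ and $X_1$, convert the cut into a separation $M=G_0\cup G_1$ with $G_0\cap G_1\subseteq F$ via the standard continuum/quasicomponent dichotomy in compacta, and then perform the splice of the previous paragraph \emph{inductively} over the increasing closed sets $F_0\cup\cdots\cup F_k$, using at each stage that $F_k\in\mathcal C$ (hence carries the needed $AE$- or non-essentiality slack), together with the countable sum theorem for $\mathcal C$ and the normal placement of $F$ to pass to the limit. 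The successful splice again extends the witness over $M$, the desired contradiction, so a joining continuum must exist. I expect controlling this countable gluing along the non-closed seam $F$---keeping the partial extensions coherent while the seam is only approximated by the closed sets $F_0\cup\cdots\cup F_k$---to be the genuinely delicate step, with the limit-stability lemma for the witnesses being the second place where real work is required.
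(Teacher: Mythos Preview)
The paper does not prove this theorem; it is quoted from \cite{kktv} as the basis for Theorems~2.1 and~2.2, so there is no proof in the present paper to compare your proposal against. Your overall strategy---fix a limit-stable obstruction (a non-extendable map into $K_{n-1}$, respectively an essential family), pass by Zorn to a minimal closed carrier $M$, and derive the manifold properties by splicing partial solutions across separators lying in $\mathcal C$---is the standard one and is essentially what \cite{kktv} carries out.

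One step in your sketch is a genuine gap. Your route to the Alexandroff property in~(1) via a Hausdorff limit $P_0$ of partitions $P_{\omega_n}$ does not work as written: only the \emph{targets} $Y_{\omega_n}$ of the $\omega_n$-maps lie in $\mathcal D_{\mathcal K}^{n-2}$, not the partitions $P_{\omega_n}$ themselves, so there is no reason $D_{\mathcal K}(P_0)\le n-2$, and the ``limit theorem for $D_{\mathcal K}$'' you invoke (presumably \cite[Theorem~2.4]{cv}) concerns a \emph{fixed} space admitting arbitrarily fine maps, not a Hausdorff limit of varying domains. The direct argument avoids limits altogether: for the fixed $f\colon A\to K_{n-1}$ choose one cover $\omega$ fine enough (depending on $f$ and the $CW$-structure of $K_{n-1}$) that for any $\omega$-map $g\colon P\to Y$ with $K_{n-1}\in AE(Y)$ the restriction $f|_{A\cap P}$ factors through $g$ up to homotopy, extend over $Y$, pull back to $P$, and then splice exactly as in your Cantor-manifold step. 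Your reduction of the Mazurkiewicz step to a separation $M=G_0\cup G_1$ with $G_0\cap G_1\subseteq F$ via quasicomponents is also suspect: $M\setminus F$ is only a $G_\delta$, and the compactness that makes components coincide with quasicomponents is unavailable there. You correctly flag this countable-seam gluing as the delicate part; the argument in \cite{kktv} handles it without first producing such a separation.
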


Some particular cases of Theorem 2.2 were established by different authors, see
\cite{ps1}, \cite{hs}, \cite{hm}, \cite{ku}, \cite{tu}, \cite{tu1}.

Here is the main questions arising from the above results.

\begin{qu}
Let $X$ be a homogeneous compact metric space and $\mathcal C\in\{\mathcal {WID}, \mathbf{C}, \mathcal D_{\mathcal{K}}^{n-2}, D_{\mathcal{K}}^{<\infty}\}$, where
$n\geq 1$ and $D_{\mathcal{K}}(U)=n$ in the case $\mathcal C=D_{\mathcal{K}}^{n-2}$. Is $X$  an Alexandroff $\mathcal C$-manifold?
What is the answer of the above question if, in addition, $X$ is locally connected?
\end{qu}

Krupski \cite{kr1} conjectured that any $n$-dimensional, homogeneous metric $ANR$-continuum is a $V^n$-continuum.
Next result, which is still unpublished, provides a partial solution of Krupski's conjecture.

\begin{thm}
Let $X$ be a homogeneous, metric $n$-dimensional $ANR$-continuum with $H^n(X,\mathbb Z)\neq 0$. Then $X$ is a $V^n$-continuum.
\end{thm}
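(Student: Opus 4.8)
The plan is to first recast the conclusion in the language of Definition~\ref{dfn1}. For the class $\mathcal C$ of (metrizable) spaces of covering dimension $\le n-2$, being a $V^n$-continuum is the same as being an Alexandroff $\mathcal C$-manifold: if a partition $P$ admits an $\omega$-map $h\colon P\to Y$ into a space $Y$ with $\dim Y\le n-2$, then $h$ is an $\omega$-map of $P$ onto the compact set $h(P)$, which is closed in $Y$ and hence satisfies $\dim h(P)\le\dim Y\le n-2$; the converse is immediate since an onto-map is a special case of an into-map. So it suffices to produce, for any two disjoint closed massive sets $X_0,X_1\subset X$, a single open cover $\omega$ such that no partition between $X_0$ and $X_1$ admits an $\omega$-map onto a space of dimension $\le n-2$. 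The first genuine step is to convert the cohomological hypothesis into a homotopical one: since $X$ is an $ANR$ with $\dim X=n$, \v{C}ech and singular cohomology agree and the Hopf classification theorem gives a bijection between $H^n(X,\mathbb Z)$ and homotopy classes of maps $X\to\mathbb S^n$; thus $H^n(X,\mathbb Z)\neq 0$ yields an essential map $f\colon X\to\mathbb S^n$.

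The second step uses homogeneity together with the $ANR$ structure to make this essentiality \emph{local}. I would show that the essential class is spread throughout $X$: for every region $U\subset X$ the restriction of $f$ near $U$ remains essential, equivalently the top local \v{C}ech cohomology of $X$ is nontrivial at every point. This follows by transporting $f$ through the self-homeomorphisms of $X$ and using that, for an $n$-dimensional $ANR$, the set of points at which the degree-$n$ local cohomology is nonzero is open; homogeneity makes it either empty or all of $X$, while the global nonvanishing $H^n(X,\mathbb Z)\neq 0$ forbids it from being empty. The same local essentiality is what the first item of the structural theorem above encodes at the level of a closed $n$-dimensional submanifold, and homogeneity is precisely what is needed to promote such a local model to a statement about all of $X$. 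The upshot I will use is that the class carried by $f$ must genuinely cross every partition separating two massive sets, so that the only obstruction to trivializing $f$ is concentrated on such a separator.

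With local essentiality in hand I fix $X_0,X_1$ and build $\omega$ from $f$: choose disjoint open neighbourhoods $U_0\supset X_0$ and $U_1\supset X_1$, take an open cover $\mathcal V$ of $\mathbb S^n$ so fine that $f$ is not $\mathcal V$-homotopic to a map omitting a point of $\mathbb S^n$, and let $\omega$ refine $f^{-1}(\mathcal V)$ together with $\{U_0,U_1,X\setminus(X_0\cup X_1)\}$; call $f$ then $\omega$-essential. Suppose, towards a contradiction, that some partition $P$ between $X_0$ and $X_1$ admits an $\omega$-map $h\colon P\to Y$ with $\dim Y\le n-2$. Because $\dim Y\le n-2$ we have $H^{n-1}(Y)=0$, and the control supplied by $h$ being an $\omega$-map lets me push $f|_P$, at the scale of $\mathcal V$, through a polyhedron of dimension $\le n-2$, hence $\mathcal V$-homotope $f|_P$ to a map omitting some point $p\in\mathbb S^n$. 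Writing $X=A\cup B$ with $A\cap B=P$ and $X_0,X_1$ in the interiors of $A$ and $B$, I then extend this homotopy over each side using the local contractibility of the $ANR$ $X$, the local essentiality from Step~2 guaranteeing that neither side carries an independent essential $n$-class blocking the extension. The result is a $\mathcal V$-homotopy of $f$ on all of $X$ to a map omitting $p$, which is inessential; this contradicts the $\omega$-essentiality of $f$.

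The hard part will be this last implication: turning the purely covering-theoretic datum that $P$ carries an $\omega$-map into an $(n-2)$-dimensional space into the homotopy-theoretic conclusion that $f|_P$ can be deformed off a point of $\mathbb S^n$ at the scale of $\mathcal V$, and then propagating that deformation across $A$ and $B$ to trivialize $f$ globally. Making this quantitative requires calibrating $\mathcal V$ (hence $\omega$) finely enough that the obstruction to lowering the skeletal dimension of $f|_P$ vanishes exactly when such an $\omega$-map exists, and then invoking the $ANR$ extension property with enough scale control that the extended homotopy still omits $p$. Homogeneity enters twice: through local essentiality, to guarantee that $f$ truly crosses every partition between the massive sets $X_0$ and $X_1$, and to reduce the construction of $\omega$ to one uniform local model valid at every point of $X$.
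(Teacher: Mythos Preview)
The paper does not actually prove this theorem: it is announced there as a ``still unpublished'' partial answer to Krupski's conjecture, and no argument is supplied. So there is no proof in the paper to compare your approach against.

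Regarding the proposal itself, the overall shape is plausible---converting $H^n(X,\mathbb Z)\neq 0$ into an essential map $f\colon X\to\mathbb S^n$ via Hopf, using homogeneity to make essentiality uniform across regions, and then arguing that a partition $P$ admitting an $\omega$-map into a space of dimension $\le n-2$ cannot carry the obstruction---but the final extension step is confused. You write that ``local essentiality from Step~2 guarantee[s] that neither side carries an independent essential $n$-class blocking the extension.'' This is backwards: local essentiality says exactly that every region of $X$ \emph{does} carry nontrivial degree-$n$ cohomology, so each of $A$ and $B$ in the decomposition $X=A\cup B$, $A\cap B=P$, will support essential maps to $\mathbb S^n$ on its own. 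The homotopy extension property for ANRs lets you extend a small homotopy of $f|_P$ over $A$ and $B$, but it gives no control over where the extended map lands; you need the extension to still omit the point $p\in\mathbb S^n$, and nothing in your sketch forces that. The genuine content of the theorem is turning the $\omega$-map hypothesis on $P$ into a vanishing of the relevant obstruction in (say) the Mayer--Vietoris connecting map $H^{n-1}(P)\to H^n(X)$, and your outline does not yet supply that link. The phrase ``push $f|_P$, at the scale of $\mathcal V$, through a polyhedron of dimension $\le n-2$'' is the right intuition, but making it precise---in particular, controlling the homotopy so that it globalizes without reintroducing $p$ in the image---is where the actual work lies.
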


As it was mentioned above, if $\mathcal C$ is the class of all spaces of covering dimension $\leq n-2$, then every Alexandroff $\mathcal C$-manifold is a Mazurkiewicz $\mathcal {C}$-manifold. So, we have next question.
\begin{qu}
Let $\mathcal C$ be one of the above four classes.
Is there any Alexandroff $\mathcal C$-manifold which is not a Mazurkiewicz $\mathcal {C}$-manifold?
\end{qu}

\begin{qu}
Let $X$ and $\mathcal C$ be as in Question $2.4$. Is  $X$ a Mazurkiewicz $\mathcal {C}$-manifold?
More generally, is it true that $X$ satisfy the conclusion in Theorem $2.2$ with $U=X$?
\end{qu}

Having in mind Theorem 2.3, next question is also interesting.

\begin{qu}
Let $\mathcal C\in\{\mathbf C, \mathcal{WID}\}$ and $X$ be a compact metric space with $X\not\in\mathcal C$. Does $X$ contain
a compact Alexandroff $\mathcal C$-manifold?
\end{qu}

The last question in this section is inspired by the following result of the first author \cite{vt}: If $M\subset \I^n$ is a set of dimension
$\dim M\leq k$, where $k\leq n-2$, then every two different points $x,y\in\I^n\backslash M$ can be joined by a $V^{n-k-1}$-continuum
$K\subset\I^n\backslash M$.

\begin{qu}
Let $X$ be a homogeneous metric (locally connected) continuum and $M\subset X$ an $F_\sigma$-set with $\dim F\leq k\leq n-2$. Is it true that any
two massive closed disjoint sets $A, B\subset X$ can be joined in $X\backslash M$ by a  $V^{n-k-1}$-continuum?
\end{qu}

We conclude this section with a few words about our definition  of Alexandroff $\mathcal C$-manifolds. The Alexandroff
definition of $V^n$-continua is based on the following property of the covering dimension: if $X$ is a paracompact space and for
every open cover $\omega$ of $X$ there exists an $\omega$-map $X$ onto a paracompact space $Y$ with $\dim Y\leq n$, then $\dim X\leq n$.
It follows from \cite[Theorem 2.4]{cv} that the dimension $D_{\mathcal{K}}$ has a similar property for spaces $X$ admitting perfect maps onto
metrizable spaces. Unfortunately, this is not true for the classes $\mathbf C$ and $\mathcal{WID}$. For example, for any open cover
 $\omega$ of the Hilbert cube $Q$ there exists an $\omega$-map onto a finite-dimensional space, but $Q$ is strongly infinite-dimensional.

\section{Indecomposable continua and Cantor manifolds}

Recall that a continuum is indecomposable if it is not the union of two proper sub-continua.
The results in this section came out from the observation \cite{Ku1} that any indecomposable continuum can not be separated by a
proper connected subset. This means that indecomposable continua are Cantor manifolds with respect to the class $\mathfrak{K}$ of all continua.

\begin{thm}
Any metric indecomposable continuum is both Mazurkiewicz $\mathfrak{K}(\aleph_0)$-manifold and a strong Cantor $\mathfrak{K}(\aleph_0)$-manifold, where
$\mathfrak{K}(\aleph_0)$ is the class of all spaces having at most countably many components.
\end{thm}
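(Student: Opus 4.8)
The plan is to exploit the composant structure of $X$. Recall that a metric indecomposable continuum has uncountably many (in fact $2^{\aleph_0}$) pairwise disjoint composants, each a dense connected $F_\sigma$-subset which is the directed union of the proper subcontinua through a fixed point; moreover every proper subcontinuum of $X$ is nowhere dense and lies in a single composant, and the union of finitely many proper subcontinua sharing a point is again a proper subcontinuum (otherwise $X$ would be decomposable), so each composant $\kappa$ is an \emph{increasing} countable union $\kappa=\bigcup_n K_n$ of proper subcontinua with $\overline{\kappa}=X$. I will reduce both assertions to one principle: a set meeting only countably many composants cannot behave like $X$.

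For the Mazurkiewicz part, let $X_0,X_1$ be disjoint closed massive sets and $F=\bigcup_i F_i$ with each $F_i$ a proper closed set having at most countably many components. Each component of $F_i$ is a proper subcontinuum, hence lies in one composant, so $F_i$ — and therefore $F$ — meets at most countably many composants. Since $X$ has uncountably many composants, I choose a composant $\kappa$ with $\kappa\cap F=\emptyset$. As $\kappa$ is dense it meets $\mathrm{int}\,X_0$ and $\mathrm{int}\,X_1$; picking $a\in\kappa\cap\mathrm{int}\,X_0$ and $b\in\kappa\cap\mathrm{int}\,X_1$, the definition of a composant yields a proper subcontinuum $K\ni a,b$ with $K\subseteq\kappa$. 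Then $K\subseteq X\setminus F$ joins $X_0$ and $X_1$; since $K$ is itself a continuum, the identity map makes it an $\omega$-map for every $\omega$, so the same $K$ also settles the weak version.

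For the strong Cantor part I argue by contradiction: suppose $X=\bigcup_i F_i$ with all $F_i$ proper closed and $Z=\bigcup_{i\ne j}(F_i\cap F_j)$ having at most countably many components. The strategy is again to produce a composant $\kappa$ disjoint from $Z$ and then apply a Sierpi\'nski-type argument. Writing $\kappa=\bigcup_n K_n$ as above, each $K_n$ is covered by the relatively closed sets $K_n\cap F_i$, which are pairwise disjoint precisely because $K_n\cap Z=\emptyset$; by Sierpi\'nski's theorem exactly one is nonempty, say $K_n\subseteq F_{i(n)}$. If $i(n)\ne i(n+1)$ then $K_n\subseteq F_{i(n)}\cap F_{i(n+1)}\subseteq Z$, impossible, so $i(n)$ is constant and $X=\overline{\kappa}\subseteq F_{i_0}$, contradicting that $F_{i_0}$ is proper. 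To secure $\kappa$, I use that every component $Q$ of $Z$ is closed in $Z$, whence $Q=Z\cap\overline{Q}$; consequently, if $Z$ has at least two components then no $Q$ can be dense, each $\overline{Q}$ is a proper subcontinuum, and $Z$ (being covered by these countably many proper subcontinua) meets only countably many composants, so a disjoint $\kappa$ exists exactly as in the Mazurkiewicz case.

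The main obstacle is the remaining case, in which $Z$ is connected and dense, the extreme instance being $Z=X$ (every point of $X$ lying in at least two of the $F_i$). Here the counting argument fails, since a dense connected set can meet \emph{all} composants, just as $X$ itself does; this configuration must therefore be excluded directly, and indecomposability is essential (the circle admits such a double cover by three proper arcs). I expect to rule it out using the observation of \cite{Ku1} that an indecomposable continuum has no proper connected separator: the aim is to show that a connected dense $Z$ coming from such a cover would have to separate $X$ between two of the massive pieces $\mathrm{int}\,F_i$, contradicting that property, while the degenerate case $Z=X$ is eliminated by a Sierpi\'nski argument showing that an indecomposable continuum cannot be doubly covered by countably many proper closed sets. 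Making this exclusion precise — in particular, transferring the separator dichotomy from finite covers, where $Z$ is automatically closed and the argument above is immediate, to the genuinely $F_\sigma$ overlap of a countable cover — is the technical heart of the proof.
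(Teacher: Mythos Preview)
Your Mazurkiewicz argument is correct and is essentially the paper's: each component of each $F_i$ is a proper subcontinuum, hence lies in a single composant; $F$ therefore meets only countably many composants, and a composant disjoint from $F$ supplies, via density, the required joining continuum.

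The strong Cantor part, however, is genuinely incomplete---as you yourself acknowledge. The connected-dense case you isolate is a real obstruction to your route, and the sketch you offer (forcing $Z$ to separate $X$, or a double-cover Sierpi\'nski argument when $Z=X$) is not carried out and not obviously salvageable: $Z$ is only $F_\sigma$, so the Kuratowski result on proper connected separators does not apply, and there is no evident mechanism to exclude $Z=X$. The paper avoids all of this by a much shorter reduction to the Mazurkiewicz property you have already established. By Baire category two of the sets, say $H_m$ and $H_n$ with $m\ne n$, have nonempty interior, so one can choose disjoint closed massive pieces inside them; applying the Mazurkiewicz property to the countable family $\{H_i\cap H_j:i\ne j\}$ of proper closed sets yields a continuum $C\subset X\setminus\bigcup_{i\ne j}(H_i\cap H_j)$ meeting both $H_m$ and $H_n$. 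Then $C=\bigcup_i(C\cap H_i)$ is a cover of the nondegenerate continuum $C$ by pairwise \emph{disjoint} closed sets with at least two of them nonempty, and Sierpi\'nski's theorem gives the contradiction. Note that for this application the paper reads the failure of the strong Cantor property as ``each $H_i\cap H_j\in\mathfrak{K}(\aleph_0)$'' (rather than the union condition of Definition~1.3); under that reading the composant count behind the Mazurkiewicz step goes through verbatim for the family $\{H_i\cap H_j\}$, and your problematic dense-connected configuration simply never arises.
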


\begin{proof}
Suppose $X$ is a metric indecomposable continuum and let $A$ and $B$ be  disjoint, closed, massive subsets of $X$.
Since there are uncountably many disjoint composants of $X$, see \cite{Ku1}, for any sequence $\{F_i\}_{i\geq 1}$ of proper closed subsets of $X$ with $F_i\in\mathfrak{K}(\aleph_0)$, $i\geq 1$, there exists a composant $K$ of $X$ disjoint from the set $F=\bigcup_{i=1}^{\infty}F_i$. Because each composant is dense in $X$ \cite{Ku1}, there exist points $a\in A\cap K$ and $b\in B\cap K$. Finally, using that any composant of a metric continuum is a countable union of proper sub-continua, we find a continuum $P\subset X\backslash F$ containing $a, b$. Therefore, $X$ is a Mazurkiewicz $\mathfrak{K}(\aleph_0)$-manifold.

Assume that $X$ is not a  strong Cantor $\mathfrak{K}(\aleph_0)$-manifold. So, $X=\bigcup_{i=1}^{\infty}H_i$ for some sequence $\{H_i\}_{i\geq 1}$ of proper closed subset of $X$ such that $H_i\cap H_j\in\mathfrak{K}(\aleph_0)$ for all $i\neq j$. According to the Baire theorem, there are two different integers $n,m$ such that both $H_m$ and $H_n$ have non-empty interior. Since $X$ is a Mazurkiewicz $\mathfrak{K}(\aleph_0)$-manifold, there exists a continuum
$C\subset X\backslash\bigcup_{i\neq j}H_i\cap H_j$ with $H_n\cap C\neq\varnothing\neq H_m\cap C$. This implies that $C$ is a non-degenerate continuum covered by the disjoint family $\{C\cap H_i:i\geq 1\}$ of closed sets at least two of which are non-empty. This contradicts the Sierpi\'{n}ski theorem. Hence, $X$ is a  strong Cantor $\mathfrak{K}(\aleph_0)$-manifold.
\end{proof}

\begin{cor}
Any hereditarily indecomposable continuum is a  weak Mazurkiewicz $\mathfrak{K}(\aleph_0)$-manifold. \end{cor}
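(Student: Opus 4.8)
The plan is to deduce the corollary almost immediately from the Theorem, since the only genuine input is the observation that hereditary indecomposability is a strengthening of ordinary indecomposability. First I would note that a hereditarily indecomposable continuum $X$ is, in particular, indecomposable: the whole space $X$ is a subcontinuum of itself, and by hypothesis every subcontinuum of $X$ is indecomposable. Being a metric continuum (as throughout the paper), $X$ then satisfies the hypotheses of the Theorem, which yields that $X$ is a Mazurkiewicz $\mathfrak{K}(\aleph_0)$-manifold.

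It then remains to check that any Mazurkiewicz $\mathcal C$-manifold is automatically a weak Mazurkiewicz $\mathcal C$-manifold. Fixing disjoint closed massive subsets $X_0,X_1\subset X$ and a set $F=\bigcup_{i=0}^\infty F_i$ with each $F_i\in\mathfrak{K}(\aleph_0)$ a proper closed subset of $X$, the Mazurkiewicz property already furnishes a continuum $K\subset X\setminus F$ joining $X_0$ and $X_1$. Given an arbitrary open cover $\omega$ of $X$, I would simply take $K_\omega=K$; since $K$ is itself a continuum, the identity $\id_K\colon K\to K$ is an $\omega$-map onto a continuum, witnessed by the cover $\gamma=\{U\cap K:U\in\omega\}$ of $K$, whose preimage refines $\omega$. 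This is exactly the conclusion required in the definition of a weak Mazurkiewicz $\mathfrak{K}(\aleph_0)$-manifold, so the implication Mazurkiewicz $\Rightarrow$ weak Mazurkiewicz holds in general.

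Because the Theorem does all the substantive work, there is no real obstacle here: the entire argument reduces to the two formal observations above. The only point meriting a word of care is the verification that the identity map qualifies as an $\omega$-map onto a continuum, so that the full Mazurkiewicz property indeed implies its weak counterpart; this is immediate once one unwinds the definition of an $\omega$-map. I would therefore keep the written proof to a single short paragraph, invoking the Theorem to obtain the Mazurkiewicz property for the indecomposable continuum $X$ and then recording the trivial passage to the weak version.
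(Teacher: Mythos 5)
There is a genuine gap: your argument only covers metrizable $X$, while the corollary is stated for \emph{arbitrary} hereditarily indecomposable continua, and the whole content of the paper's proof is the non-metrizable case. Theorem 3.1 is proved only for \emph{metric} indecomposable continua --- its proof relies on the composant structure of metric continua (uncountably many pairwise disjoint dense composants, each a countable union of proper subcontinua), and the paper explicitly leaves the non-metric indecomposable case open in Question 3.4. So you cannot ``satisfy the hypotheses of the Theorem'' for a general hereditarily indecomposable continuum. Your opening reduction (hereditarily indecomposable $\Rightarrow$ indecomposable, and Mazurkiewicz $\Rightarrow$ weak Mazurkiewicz via taking $K_\omega=K$ with the identity as an $\omega$-map) is correct and is exactly why the paper dispenses with the metric case in one line; but it is the easy half.

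Two warning signs should have flagged this. First, your proof never uses hereditary indecomposability beyond the trivial implication to indecomposability; if the corollary really followed that way, it would hold for all indecomposable continua and the stronger hypothesis would be pointless. Second, the conclusion is deliberately weakened to ``weak Mazurkiewicz,'' which your argument makes no use of: the paper needs the weak form precisely because in the non-metrizable case it only produces a (possibly disconnected) compact set $K_\omega=\pi_\alpha^{-1}(P)$ admitting an $\omega$-map onto a continuum $P$, not a genuine continuum joining $A$ and $B$. The missing idea is the Hart--Pol factorization: write the non-metrizable $X$ as the limit of a $\sigma$-continuous inverse system of \emph{metric hereditarily indecomposable} continua $X_\sigma$ (here hereditary indecomposability is essential, since it passes to inverse limits and the factors), push $A$, $B$, the cover $\omega$, and the sets $F_i$ down to a suitable metric coordinate space $X_\alpha$, apply Theorem 3.1 there, and pull the resulting continuum back. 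Without this step the proof does not establish the stated corollary.
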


\begin{proof}
According to Theorem 3.1, we may assume that $X$ is non-metrizable. Then, by \cite[Proposition 4.2]{hp}, $X$ is the limit space of an inverse system $S=\{X_\sigma,\pi^\sigma_\rho, \Sigma\}$ consisting of metric hereditarily indecomposable continua $X_\sigma$. Since the limit space of each inverse sequence of hereditarily indecomposable continua is also hereditarily indecomposable, we may assume that $S$ is $\sigma$-continuous (i.e., $\beta=\sup\{\alpha_n:n\geq 1\}\in\Sigma$ and
$X_\beta=\displaystyle\lim_\leftarrow\{X_{\alpha_n},\pi^{\alpha_{n+1}}_{\alpha_n}\}$ for
every countable chain $\{\alpha_n:n\geq 1\}$ in $\Sigma$).
Denote by $\pi_\sigma\colon X\to X_\sigma$, $\sigma\in\Sigma$, the limit projections.
Let $A$ and $B$ be  disjoint, closed and massive subsets of $X$, $\{F_i\}_{i\geq 1}$ a sequence of proper closed subsets of $X$ with $F_i\in\mathfrak{K}(\aleph_0)$ for all $i$, and $\omega$ be an open cover of $X$. There exists $\alpha\in\Sigma$, an open cover $\omega_\alpha$ of $X_\alpha$ and disjoint open sets $U_A$ and $U_B$ in $X_\alpha$ such that $\pi_\alpha^{-1}(U_A)\subset A$, $\pi_\alpha^{-1}(U_B)\subset B$,
$\pi_\alpha^{-1}(\omega_\alpha)$ refines $\omega$ and $\pi_\alpha(F_i)\neq X_\alpha$ for all $i$. Such $\alpha$ exists because $S$ is $\sigma$-continuous. Since $X_\alpha$ is a Mazurkiewicz $\mathfrak{K}(\aleph_0)$-manifold, there is a continuum $P\subset X_\alpha\backslash\bigcup_{i=1}^{\infty}\pi_\alpha(F_i)$ joining $U_A$ and $U_B$. Then the set $K=\pi_\alpha^{-1}(P)\subset X\backslash\bigcup_{i=1}^{\infty}F_i$ is joining $A$ and $B$, and $\pi_\alpha\colon K\to P$ is an $\omega$-map. This completes the proof.
\end{proof}

\begin{thm}
Any metric indecomposable continuum is an Alexandroff $\mathfrak{K}$-manifold.
\end{thm}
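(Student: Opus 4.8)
The plan is to argue by contradiction, converting the hypothetical existence of partitions carrying $\omega$-maps onto continua into a proper subcontinuum that separates $X$ — which is impossible in an indecomposable continuum. Suppose $X$ were not an Alexandroff $\mathfrak{K}$-manifold. Then there would be disjoint, closed, massive sets $X_0,X_1\subset X$ such that for \emph{every} open cover $\omega$ of $X$ some partition $P_\omega$ between $X_0$ and $X_1$ admits an $\omega$-map onto a continuum. The key reduction I would record first is that an $\omega$-map onto a continuum forces the domain to be $\omega$-connected: if $f\colon P\to Y$ is an $\omega$-map onto a continuum $Y$ and $\gamma$ is an open cover of $Y$ with $f^{-1}(\gamma)$ refining $\omega$, then, $Y$ being connected, any two of its points are joined by a $\gamma$-chain; pulling this chain back through the surjection $f$ (each link $f^{-1}(G)$ lies in a member of $\omega$, and consecutive links meet because $f$ is onto) shows that $P$ cannot be split into two nonempty relatively closed pieces with no member of $\omega$ meeting both.

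Next I would pass to a limit. Choosing, for each $k$, a finite open cover $\omega_k$ with $\mesh\,\omega_k<1/k$, I obtain partitions $P_k$ between $X_0$ and $X_1$, each of them $\omega_k$-connected. Since the hyperspace of the compact space $X$ is compact, after passing to a subsequence I may assume $P_k\to P$ in the Hausdorff metric, with $P$ a nonempty closed set. I then make two observations. First, $P$ is connected: if $P=Q_0\sqcup Q_1$ with $\dist(Q_0,Q_1)=\delta>0$, then for $k$ so large that both $\mesh\,\omega_k$ and the Hausdorff distance $\dist(P_k,P)$ are below $\delta/4$, the set $P_k$ splits into the two nonempty pieces lying within $\delta/4$ of $Q_0$ and of $Q_1$, which are $\omega_k$-separated, contradicting $\omega_k$-connectedness. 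Second, $P$ is a proper subset: each $P_k$ is disjoint from $X_0$, so $P_k\subset X\setminus\inte X_0$, whence $P\cap\inte X_0=\emptyset$, while $\inte X_0\neq\emptyset$ by massiveness. Thus $P$ is a proper subcontinuum of $X$.

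It remains to see that $X\setminus P$ is disconnected and to close the argument. Fix points $a\in\inte X_0$ and $b\in\inte X_1$; each $P_k$ separates $a$ from $b$, and $P$ misses both $a$ and $b$ by the second observation above. The crucial step is that a Hausdorff limit of partitions between two points that avoids those points is itself a separator, so $P$ separates $a$ from $b$ and $X\setminus P$ is disconnected. Finally I would use that a proper subcontinuum $C$ of an indecomposable continuum has connected complement: if $X\setminus C=G_1\sqcup G_2$ with $G_1,G_2$ nonempty open, then $C\cup\overline{G_1}$ and $C\cup\overline{G_2}$ are proper subcontinua whose union is $X$, contradicting indecomposability (this is the separation-by-a-continuum obstruction recorded at the start of this section). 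Applying this with $C=P$ contradicts the disconnectedness of $X\setminus P$, which completes the proof.

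The one non-elementary ingredient, and where I expect the technical heart to lie, is the separator-limit fact used above: a Hausdorff limit of partitions between two points, when it misses those points, is again a partition. This can be supplied either by the classical limit theorem for partitions from dimension theory or, self-containedly, by the continuum-connection theorem — if $P$ failed to separate $a$ from $b$, there would be a continuum in $X\setminus P$ joining $a$ to $b$, and since this continuum is compact and disjoint from $P$ it would avoid $P_k$ for all large $k$, contradicting that $P_k$ separates $a$ from $b$. Keeping $P$ off the points $a$ and $b$ is exactly the place where the massiveness hypothesis is consumed, and it is indispensable: without it the limit could collapse onto the boundary of $X_0$ and fail to separate.
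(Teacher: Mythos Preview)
Your overall architecture is exactly the paper's: pass to a Hausdorff limit $P$ of the partitions, show $P$ is connected using the $\omega$-map structure, show $P$ is proper, show $P$ still separates, and then contradict the fact that no proper subcontinuum of an indecomposable continuum separates it. Steps 1--4 and the final step are fine and match the paper.

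The gap is in your step ``$P$ separates $a$ from $b$''. Neither of your two proposed justifications is valid in the generality you claim. The ``continuum-connection theorem'' --- that if $P$ fails to separate $a$ from $b$ then some continuum in $X\setminus P$ joins them --- is false in general metric continua: in the $\sin(1/x)$ continuum, take $a$ at the arc end, $b=(0,0)$, and $P=\{(0,1/2)\}$; then $X\setminus P$ is connected, yet no continuum in $X\setminus P$ joins $a$ to $b$. The same example refutes the ``classical limit theorem for partitions'': points $P_k$ on the arc with $P_k\to(0,1/2)$ are separators of $a$ from $b$ whose limit is not. So this step is not a general hyperspace fact; it genuinely needs indecomposability.

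The paper closes exactly this gap by invoking its Theorem~3.1: a metric indecomposable continuum is a Mazurkiewicz $\mathfrak{K}(\aleph_0)$-manifold (proved via the uncountable family of dense, pairwise disjoint composants). Since the limit $P_0$ is a proper subcontinuum, hence lies in $\mathfrak{K}(\aleph_0)$, Theorem~3.1 produces a continuum $K\subset X\setminus P_0$ joining small massive neighborhoods $A_1\ni a$, $B_1\ni b$; then $K$ misses $P_{\omega_s}$ for large $s$, contradicting that $P_{\omega_s}$ partitions $A$ from $B$. In other words, the ``continuum-connection'' you want is available, but it is a special feature of indecomposable continua supplied by the composant argument, not a general theorem. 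Plugging Theorem~3.1 in at that point makes your proof correct and essentially identical to the paper's.
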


\begin{proof}
Suppose there exists an indecomposable metric continuum $(X,d)$ which is not an Alexandroff $\mathfrak{K}$-manifold. Consequently, we have two closed,
disjoint, massive sets $A$ and $B$ in $X$ satisfying the following condition: for every open cover $\omega$ of $X$ there exists a partition $P_\omega$ in $X$
between $A$ and $B$, and a surjective $\omega$-map $g_\omega\colon P_\omega\to Y_\omega$, where $Y_\omega$ is a continuum.
Since $X$ is a metric compactum, there is a sequence $\{\omega_n\}$ of open covers of $X$ such that $\mesh(\omega_n)<1/n$ and the sequence $\{P_{\omega_n}\}$ converges to the compact set $P_0\subset X$ with respect to the Hausdorff metric generated by $d$. Let $a\in\mathrm{Int}(A)$
and $b\in\mathrm{Int}(B)$. Because each $P_{\omega_n}$ is disjoint from $A\cup B$, $a, b\not\in P_0$.

\textit{Claim $1$. $P_0$ is connected}.

Indeed, otherwise let $P_0=P_1\cup P_2$ be the union of two disjoint non-empty closed subsets. Choose two open sets $U_1, U_2$ in $X$ having disjoint
closures such that
$P_j\subset U_j$, $j=1,2$. Then there exists $m$ such that $P_{\omega_m}\subset U_1\cup U_2$ and $P_{\omega_m}\cap U_j\neq\varnothing$ for each $j=1,2$. We may assume that $m$ is so big that the fibers of the map $g_{\omega_n}$ meets only one of the sets $U_1, U_2$. The last condition implies
that $g_{\omega_m}(P_{\omega_m}\cap\overline{U_1})$ and $g_{\omega_m}(P_{\omega_m}\cap\overline{U_2})$ are non-empty disjoint subsets of
$Y_{\omega_m}$ whose union is $Y_{\omega_m}$, a contradiction.

\textit{Claim $2$. $P_0$ is a partition of $X$}.

Assume $P_0$ is not a partition of $X$, and choose two closed  sets $A_1, B_1$ in $X$ both disjoint from $P_0$ and having non-empty interior such that $a\in A_1\subset A$ and $b\in B_1\subset B$. Since $X$ is a Mazurkiewicz $\mathfrak{K}(\aleph_0)$-manifold, there exists a continuum $K\subset X\backslash P_0$ joining $A_1$ and $B_1$. So, $P_0\subset X\backslash K$, which yields that $P_{\omega_s}\subset X\backslash K$ for some $s$. The last inclusion contradicts the fact that $P_{\omega_s}$ is a partition of $X$ between $A$ and $B$.

Hence, $P_0$ is a proper sub-continuum of $X$ separating $X$, which is impossible because $X$ is indecomposable.
\end{proof}

\begin{qu}
Let $X$ be an indecomposable continuum. Is it true that:
\begin{itemize}
\item $X$ is a strong Cantor $\mathfrak{K}(\aleph_0)$-manifold;
\item $X$ is an Alexandroff $\mathfrak{K}$-manifold?
\end{itemize}
\end{qu}

Next two propositions provide more examples of hereditarily indecomposable continua, and generalize the well known facts \cite{b}
that every $(n+1)$-dimensional (resp., strongly infinite-dimensional) metric compactum contains an $n$-dimensional
(resp., strongly infinite-dimensional) hereditarily indecomposable continuum (see also \cite{hmp} for another proofs).

\begin{pro}
Let $X$ be a metric compactum of dimension $D_{\mathcal{K}}(X)=n+1$, where $n\geq 0$ and $\mathcal{K}=\{K_0,K_1,..\}$ is a given stratum.
Then $X$ contains a hereditarily indecomposable continuum $X_0$ of dimension $D_{\mathcal{K}}(X_0)\in\{n,n+1\}$.
\end{pro}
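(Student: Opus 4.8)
The plan is to realize $X_0$ as a connected component of a fibre of a suitably generic map $f\colon X\to\I$, combining two ingredients: a source of hereditary indecomposability coming from the Bing--Krasinkiewicz theory of crooked maps, and a pair of Hurewicz-type estimates for the dimension $D_{\mathcal K}$ that pin the dimension of the component into $\{n,n+1\}$. The reason for mapping onto the \emph{one}-dimensional target $\I$ (rather than a higher cube) is that it forces the fibres to carry all but one of the $n+1$ dimensions, so that a single fibre already has $D_{\mathcal K}\ge n$; a higher-dimensional target would only produce a low-dimensional continuum.

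Concretely, I would first invoke Krasinkiewicz's theorem: for any metric compactum the \emph{Bing maps} $f\colon X\to\I$, i.e.\ those all of whose fibres are hereditarily indecomposable compacta, form a dense $G_\delta$ in $C(X,\I)$ (cf.\ \cite{b},\cite{hmp}); fix such an $f$. I then apply the Hurewicz inequality for $D_{\mathcal K}$ over the one-dimensional base, namely $D_{\mathcal K}(X)\le 1+\sup_{t}D_{\mathcal K}(f^{-1}(t))$; since $D_{\mathcal K}(X)=n+1$, some fibre $Z=f^{-1}(t_0)$ satisfies $D_{\mathcal K}(Z)\ge n$, while $Z\subset X$ closed gives $D_{\mathcal K}(Z)\le n+1$ by the monotonicity of $D_{\mathcal K}$ on $F_\sigma$-subsets recorded in Section~2. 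Finally I pass to the quotient map $q\colon Z\to C$ onto the space $C$ of components of $Z$, which for a compactum is a zero-dimensional compactum; applying the Hurewicz inequality for the zero-dimensional base, $D_{\mathcal K}(Z)\le\sup_{c}D_{\mathcal K}(q^{-1}(c))$, I obtain a component $X_0=q^{-1}(c_0)$ with $D_{\mathcal K}(X_0)\ge n$. Because $X_0$ is a component of the hereditarily indecomposable compactum $Z$, every subcontinuum of $X_0$ is a subcontinuum of $Z$ and hence indecomposable, so $X_0$ is a hereditarily indecomposable continuum; and $n\le D_{\mathcal K}(X_0)\le D_{\mathcal K}(Z)\le n+1$ forces $D_{\mathcal K}(X_0)\in\{n,n+1\}$, as required.

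The identification of $X_0$ as hereditarily indecomposable and the upper dimension bound are immediate from the component structure and monotonicity, so the substantive work is the two Hurewicz-type inequalities for $D_{\mathcal K}$. I would derive them from the stratum axioms together with the countable sum theorem: the axiom $K_n\in AE(X)\Rightarrow K_{n+1}\in AE(X\times\I)$ is precisely the behaviour of $D_{\mathcal K}$ under multiplication by $\I$, and, decomposing the finite-dimensional base into countably many small pieces over which the relevant extension problems are controlled, it yields $D_{\mathcal K}(X)\le\dim Y+\sup_{y}D_{\mathcal K}(f^{-1}(y))$ for maps onto a finite-dimensional (here one- or zero-dimensional) compactum $Y$; compare the extension-theoretic results used in Section~2 (e.g.\ \cite{cv},\cite{dr}). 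I expect this dimension bookkeeping to be the main obstacle, since it must be made to work uniformly for every stratum $\mathcal K$ rather than for covering dimension alone; once it is in place, the combination with Krasinkiewicz's theorem and the component argument is formal. A minor point to dispose of separately is the base case $n=0$, where one only needs a nondegenerate hereditarily indecomposable continuum inside the necessarily non-zero-dimensional space $X$, which the same construction supplies.
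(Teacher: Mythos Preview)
Your approach is correct and essentially the same as the paper's: both invoke Levin's Bing-map theorem \cite{le} to obtain a map $g\colon X\to\I$ whose fibres have hereditarily indecomposable components, and both use the Hurewicz-type inequality for $D_{\mathcal K}$ (the paper cites \cite[Theorem~2.4]{cv1} rather than rederiving it) to find such a component with $D_{\mathcal K}\ge n$. The only cosmetic difference is that the paper collapses your two Hurewicz applications into one by first taking the monotone--light factorization $g=h\circ p$: since $h\colon Y\to\I$ is light one has $\dim Y\le 1$, and the fibres of the monotone part $p$ are precisely the components of the fibres of $g$, so a single application of the inequality over the one-dimensional base $Y$ already produces the desired $X_0=p^{-1}(y_0)$.
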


\begin{proof}
By \cite{le}, there exists a map $g\colon X\to\mathbb I$ such that all components of the fibers $g^{-1}(t)$, $t\in\mathbb I$, are
hereditarily indecomposable. Let $g=h\circ p$ be the monotone-light decomposition of $g$ with $p\colon X\to Y$ being monotone and
$h\colon Y\to\mathbb I$ light. Then, by Hurewicz's theorem, $\dim Y\leq 1$. Assuming that $D_{\mathcal{K}}(p^{-1}(y))\leq n-1$
for all $y\in Y$, according to \cite[Theorem 2.4]{cv1}, we obtain $D_{\mathcal{K}}(X)\leq n$. So, there exists $y_0\in Y$ with
$D_{\mathcal{K}}(p^{-1}(y_0))\geq n$. Then $X_0=p^{-1}(y_0)$ is the required continuum.
\end{proof}

Similarly, one can prove the following propositions.

\begin{pro}
Let $X$ be a strongly infinite-dimensional $($resp., weakly infinite-dimensional with $X\not\in\bf{C}$$)$
metric compactum.
Then $X$ contains a strongly infinite-dimensional hereditarily indecomposable continuum $X_0$ $($resp., weakly infinite-dimensional
hereditarily indecomposable continuum $X_0\not\in\bf{C}$$)$.
\end{pro}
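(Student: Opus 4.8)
The plan is to follow the proof of Proposition 3.5 essentially verbatim, replacing only the Hurewicz-type inequality for $D_{\mathcal K}$ (from \cite{cv1}) by the corresponding mapping theorems for the classes $\mathcal{WID}$ and $\mathbf C$. First I would apply Levin's theorem \cite{le} to obtain a map $g\colon X\to\mathbb I$ all of whose fiber components are hereditarily indecomposable, take the monotone--light factorization $g=h\circ p$ with $p\colon X\to Y$ monotone and $h\colon Y\to\mathbb I$ light, and invoke Hurewicz's theorem to conclude $\dim Y\le 1$. Since $p$ is monotone, every fiber $p^{-1}(y)$ is connected and, being a component of a fiber of $g$, is a hereditarily indecomposable continuum. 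Thus everything reduces to locating a single fiber that lies outside the relevant class.

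For the strongly infinite-dimensional case I would argue by contradiction. Suppose every fiber $p^{-1}(y)$ were weakly infinite-dimensional. The key input is the Hurewicz-type mapping theorem asserting that a closed surjection of compacta with finite-dimensional base and weakly infinite-dimensional fibers has weakly infinite-dimensional total space; applied to $p$ (recall $\dim Y\le 1$) this would force $X$ to be weakly infinite-dimensional, contradicting the hypothesis that $X$ is strongly infinite-dimensional. Hence some fiber $X_0=p^{-1}(y_0)$ is strongly infinite-dimensional, and by the previous paragraph it is the required hereditarily indecomposable continuum.

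For the class $\mathbf C$ I would run the same contradiction scheme. Since $\dim Y\le 1$, the base $Y$ is finite-dimensional and hence a $C$-space; if every fiber $p^{-1}(y)$ belonged to $\mathbf C$, then the mapping theorem for $C$-spaces (a closed surjection whose base and all of whose fibers are $C$-spaces has $C$-space total space) would yield $X\in\mathbf C$, contrary to assumption. So some fiber $X_0=p^{-1}(y_0)\notin\mathbf C$. Finally, because $X$ is weakly infinite-dimensional and weak infinite-dimensionality is hereditary with respect to closed subsets of a compactum, $X_0$ is itself weakly infinite-dimensional, which gives exactly the desired hereditarily indecomposable continuum $X_0\not\in\mathbf C$.

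The main obstacle is pinning down and correctly quoting the Hurewicz-type mapping theorem for $\mathcal{WID}$. The $C$-space version is standard, being a Rohm-type preservation of property $C$ under closed maps with $C$-space fibers over a $C$-space base; but preservation of weak infinite-dimensionality under a map with finite-dimensional base is considerably more delicate, since weak infinite-dimensionality is notoriously badly behaved under products and under general maps. I would either locate such a theorem in the literature in precisely the form \emph{finite-dimensional base plus weakly infinite-dimensional fibers implies weakly infinite-dimensional total space}, or, if no clean reference is available, prove it directly by showing that an essential family of maps witnessing strong infinite-dimensionality of $X$ can be transferred to a single fiber once the finitely many ``directions'' absorbed by the base $Y$ are discarded.
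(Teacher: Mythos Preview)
Your proposal is correct and is precisely the approach the paper intends: the paper gives no separate argument for this proposition but merely writes ``Similarly, one can prove the following propositions,'' pointing back to the proof of Proposition~3.5, and your write-up carries out exactly that transfer. Your caution about the Hurewicz-type theorem for $\mathcal{WID}$ is well placed, but the required statement (a closed surjection of metric compacta with finite-dimensional base and weakly infinite-dimensional fibers has weakly infinite-dimensional domain) is available in the literature---see, e.g., Engelking's \emph{Theory of Dimensions: Finite and Infinite}~\cite{re:95}---so no ad hoc argument is needed.
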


Dranishnikov's example \cite{d} of a strongly infinite-dimensional metric compactum having cohomological dimension $\dim_{\mathbb Z}=3$
implies next corollary.

\begin{cor}
There exists a strongly infinite-dimensional hereditarily indecomposable continuum $X$ with $\dim_{\mathbb Z}(X)\in\{2,3\}$.
\end{cor}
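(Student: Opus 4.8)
The plan is to read the statement off Dranishnikov's compactum together with the preceding proposition, and then to pin down the cohomological dimension by two soft estimates rather than by any further construction. Let $X$ be Dranishnikov's strongly infinite-dimensional metric compactum with $\dim_{\mathbb Z}X=3$, provided by \cite{d}. Since $X$ is strongly infinite-dimensional, the preceding proposition yields a strongly infinite-dimensional hereditarily indecomposable continuum $X_0\subset X$. I would take this $X_0$ as the space asserted by the corollary, so that the only thing left to verify is the two-sided bound $\dim_{\mathbb Z}X_0\in\{2,3\}$.

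For the upper bound I would simply use monotonicity of cohomological dimension on closed subspaces: $X_0$ is a subcontinuum of $X$, hence closed in $X$, so $\dim_{\mathbb Z}X_0\le\dim_{\mathbb Z}X=3$. The lower bound is the only genuinely non-trivial point. Here I would invoke the classical coincidence of covering and cohomological dimension at level one, namely that a compact metric space $Z$ with $\dim_{\mathbb Z}Z\le 1$ already satisfies $\dim Z\le 1$; equivalently, every infinite-dimensional metric compactum has $\dim_{\mathbb Z}\ge 2$. As $X_0$ is strongly infinite-dimensional it is in particular infinite-dimensional, and therefore $\dim_{\mathbb Z}X_0\ge 2$. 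Combining the two inequalities gives $\dim_{\mathbb Z}X_0\in\{2,3\}$.

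The hard part is exactly this lower estimate: one must exclude the a priori possibility that the continuum handed back by the proposition is cohomologically at most one-dimensional, which is what would spoil the bracket and enlarge $\{2,3\}$ to $\{1,2,3\}$. It is worth stressing that no work is required to force a single continuum to enjoy both properties at once. Strong infinite-dimensionality is arranged by the proposition, and once it is in hand the cohomological bounds are automatic---the upper one because $X_0$ sits inside $X$, and the lower one because strong infinite-dimensionality forbids $\dim_{\mathbb Z}\le 1$. Thus the whole argument reduces to quoting the preceding proposition together with the level-one coincidence of covering and cohomological dimension.
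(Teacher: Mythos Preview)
Your proposal is correct and is essentially the paper's intended argument. The paper gives no explicit proof beyond the single sentence that Dranishnikov's example implies the corollary; your reading---apply Proposition~3.6 to Dranishnikov's compactum to obtain a strongly infinite-dimensional hereditarily indecomposable $X_0\subset X$, then bound $\dim_{\mathbb Z}X_0$ above by monotonicity and below via the coincidence $\dim_{\mathbb Z}\le 1\Leftrightarrow\dim\le 1$ (since $K(\mathbb Z,1)\simeq S^1$)---is exactly the detail the paper leaves implicit.
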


Next proposition is analogue of Bing's partition theorems \cite{b} (recall that a compactum $X$ is called hereditarily indecomposable provided
each continuum in $X$ is indecomposable).

\begin{pro}
Let $f\colon X\to Y$ be a perfect map between metric spaces with $X$ being connected, and let $A, B$ be two closed disjoint subsets of $X$. Then there is a closed partition $H$ of $X$ between $A$ and $B$ with the following properties:
\begin{itemize}
\item[(i)] The intersection $f^{-1}(y)\cap H$ is hereditarily indecomposable for every $y\in Y$;
\item[(ii)] If $K$ is a continuum contained in some $f^{-1}(y)$ such that $K\cap (A\cup B)\neq\varnothing$, then $K$ contains a component
of the set $f^{-1}(y)\cap H$.
\end{itemize}
\end{pro}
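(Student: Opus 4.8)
The plan is to realize $H$ as a level set of a single real-valued map that is simultaneously a Urysohn function for the pair $(A,B)$ and a fibrewise Bing map for $f$. First I would fix a Urysohn function $u\colon X\to\mathbb I$ with $u(A)=0$ and $u(B)=1$, and then work inside the (complete, since $f$ is perfect) space of bounded maps $g\colon X\to\mathbb I$ agreeing with $u$ on $A\cup B$. Invoking the Bing-map technique of \cite{le} in its parametrised form---legitimate because the fibres $f^{-1}(y)$ are compact---I would show that the maps $g$ for which every component of every set $f^{-1}(y)\cap g^{-1}(t)$ is hereditarily indecomposable form a dense $G_\delta$. Fixing such a $g$ and putting $H=g^{-1}(1/2)$ yields a closed set with $X\setminus H=g^{-1}([0,1/2))\cup g^{-1}((1/2,1])$, i.e. a closed partition between $A$ and $B$; property (i) is then immediate, since $f^{-1}(y)\cap H=(g|_{f^{-1}(y)})^{-1}(1/2)$.

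For (ii), let $K\subset f^{-1}(y)$ be a continuum meeting $A\cup B$; the substantive case is the one in which $K$ also reaches into both open regions cut off by $H$, so that $g$ takes values on both sides of $1/2$ along $K$ (note that a point of $K\cap A$ already forces a value $g<1/2$, since $g=0$ there). In that case $K\cap H$ separates $K$ between a point with $g<1/2$ and a point with $g>1/2$, so by the classical theorem that a closed separator of two points in a continuum already contains a component separating them, there is a component $C_0$ of $K\cap H$ that separates $K$. I would then compare $C_0$ with the component $C$ of $f^{-1}(y)\cap H$ containing it, and argue---using the hereditary indecomposability (indeed the crookedness underlying the Bing map) of $f^{-1}(y)\cap H$---that no point of $C$ can lie outside $K$ without producing a bypass that destroys the separation $C_0$ provides; hence $C_0=C$ is a full component of $f^{-1}(y)\cap H$ contained in $K$, which is exactly (ii).

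The two steps I expect to resist are precisely these. On the one hand, the parametrised Bing-map statement must be set up carefully in the merely-metric (non-compact) total space $X$: one has to select the right complete function space and verify that the fibrewise Bing condition is both $G_\delta$ and dense, leaning on compactness of the fibres of the perfect map $f$ rather than of $X$ itself. On the other hand---and this is the real crux---the passage from a separating component $C_0$ of $K\cap H$ to a \emph{full} component $C$ of $f^{-1}(y)\cap H$ is not formal: hereditary indecomposability alone does not obviously forbid $C\setminus K\neq\varnothing$, and I expect to need the finer crooked-chain information from Bing's construction \cite{b} (or an equivalent irreducibility property of the level components of a Bing map) to pin down $C_0=C$. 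I would therefore carry out this identification in tandem with the construction of $g$, building enough crookedness into $g$ that the separating level-components are automatically maximal.
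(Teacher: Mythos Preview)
Your overall architecture---take a Urysohn function, perturb it to a fibrewise Bing map $g$, and set $H=g^{-1}(1/2)$---is exactly the paper's. The difference is in how property (ii) is obtained, and here the paper sidesteps precisely the difficulty you flagged as ``the real crux.''

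The paper does not try to argue from the Bing property alone. It invokes \cite{vv}, which produces a $g$ that is $1/4$-close to the Urysohn function $h$ and whose restrictions $g_y=g|f^{-1}(y)$ are simultaneously \emph{Bing maps} (fibres hereditarily indecomposable) and \emph{Krasinkiewicz maps}: every continuum $K\subset f^{-1}(y)$ is either contained in a fibre of $g_y$ or contains a component of a fibre of $g_y$. The Krasinkiewicz clause is exactly the ``irreducibility property of the level components'' you were groping for, and once you have it, (ii) is immediate---there is no need to pass through a separating component $C_0$ of $K\cap H$ and then argue $C_0=C$. So the extra structure you propose to build into $g$ already has a name and a reference; you should cite \cite{vv} (or the underlying Krasinkiewicz-map machinery) rather than redo it by hand.

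Two smaller points. First, you insist that $g$ agree with $u$ on $A\cup B$; the paper only asks $|g-h|<1/4$, which is what the density statement in \cite{vv} actually delivers and which suffices to make $g^{-1}([0,1/2))$ and $g^{-1}((1/2,1])$ neighbourhoods of $A$ and $B$. Second, your case analysis for (ii) assumes $K$ reaches both sides of $H$, but nothing in the hypothesis forces $g|K$ to exceed $1/2$; the Krasinkiewicz property handles this uniformly without that case split, which is another reason to use it directly.
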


\begin{proof}
Let $h\colon X\to\mathbb I$ be a continuous function with $h(A)=0$ and $h(B)=1$. According to \cite{vv}, there exists a function
$g\colon X\to\mathbb I$ such that $|g(x)-h(x)|<1/4$ for all $x\in X$ and the restrictions $g_y=g|f^{-1}(y):f^{-1}(y)\to\I$, $y\in Y$,
satisfy the following conditions: the fibers of $g_y$ are hereditarily
indecomposable, and any continuum $K\subset f^{-1}(y)$ either is contained in a fiber of $g_y$ or contains a component of a fiber of $g_y$. Then $U_A=g^{-1}([0,1/2))$ and $U_B=g^{-1}((1/2, 1])$ are disjoint neighborhoods of $A$ and $B$, respectively. Moreover,
$H=X\backslash (U_A\cup U_B)=g^{-1}(1/2)$, which implies that $H$ has the desired properties.
\end{proof}

Below $\mathfrak{In}$ denotes either the class of indecomposable continua or the class of hereditarily indecomposable compacta.
\begin{pro}
Let $X\in\mathfrak{In}$  and $f\colon X\to Y$ be a surjective map. Then there exists a compactum $Z\in\mathfrak{In}$ and
maps $g\colon X\to Z$, $h\colon Z\to Y$ such that $w(Z)=w(Y)$. Moreover, we can have $D_{\mathcal{K}}(Z)\leq D_{\mathcal{K}}(X)$.
\end{pro}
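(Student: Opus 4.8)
The plan is to obtain the factorization by a spectral argument, building $Z$ as the limit of a suitable subsystem of a representation of $X$ by metric members of $\mathfrak{In}$.

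First I would represent $X$ as the limit of a $\sigma$-continuous inverse system $S=\{X_\sigma,\pi^\sigma_\rho,\Sigma\}$ in which every $X_\sigma$ is a metric compactum belonging to $\mathfrak{In}$ and all bonding maps are surjective. For the class of hereditarily indecomposable compacta this is exactly \cite[Proposition 4.2]{hp}, the same representation used in Corollary 3.2; for the class of indecomposable continua one needs the analogous representation, which I address in the last paragraph. Using the spectral theorem for $D_{\mathcal K}$ (the $AE$-reflection for $CW$-complexes, see \cite{ch:03}), I would then pass to a cofinal subsystem and assume in addition that $D_{\mathcal K}(X_\sigma)\le D_{\mathcal K}(X)$ for every $\sigma$: if $D_{\mathcal K}(X)\le m$, then $K_m\in AE(X)=AE(\varprojlim X_\sigma)$, and this reflects to $K_m\in AE(X_\sigma)$ on a cofinal set of indices.

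Next I would perform the factorization. Put $\tau=w(Y)$ and write $Y=\varprojlim\{Y_a,q^a_b,B\}$ with each $Y_a$ metric and $\card B\le\tau$. For each $a$ the composite $q_a\circ f\colon X\to Y_a$ has metric range, hence factors through some coordinate: there are $\sigma(a)\in\Sigma$ and a map $f_a\colon X_{\sigma(a)}\to Y_a$ with $q_a\circ f=f_a\circ\pi_{\sigma(a)}$. I would then \emph{close off}: choose $\Sigma'\subseteq\Sigma$ directed, closed under countable suprema, with $\card\Sigma'\le\tau$ and containing every $\sigma(a)$. Setting $Z=\varprojlim\{X_\sigma:\sigma\in\Sigma'\}$, let $g\colon X\to Z$ be the natural projection onto the subsystem limit (surjective, since the bonding maps are) and let $h\colon Z\to Y$ be induced by the compatible family $\{f_a\}$, so that $f=h\circ g$. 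Since $Z$ is the limit of a $\sigma$-continuous system of metric members of $\mathfrak{In}$ with surjective bonding maps, and this class is closed under such limits, $Z\in\mathfrak{In}$; for indecomposable continua the closure follows because, given a decomposition $Z=P\cup Q$ into proper subcontinua, the sets $\{\sigma:\pi_\sigma(P)=X_\sigma\}$ and $\{\sigma:\pi_\sigma(Q)=X_\sigma\}$ are downward closed and cover the directed set $\Sigma'$, forcing $P=Z$ or $Q=Z$. The bound $w(Z)\le\tau$ comes from $\card\Sigma'\le\tau$ and metrizability of the factors, while $w(Z)\ge w(Y)$ follows from surjectivity of $h$ (indeed $h(Z)=h(g(X))=f(X)=Y$); hence $w(Z)=w(Y)$. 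Finally $D_{\mathcal K}(Z)\le D_{\mathcal K}(X)$ follows from $D_{\mathcal K}(X_\sigma)\le D_{\mathcal K}(X)$ and the limit theorem for $D_{\mathcal K}$, since $K_m\in AE(X_\sigma)$ for all $\sigma\in\Sigma'$ yields $K_m\in AE(Z)$.

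The main obstacle is the very first step in the merely indecomposable case. While closure of $\mathfrak{In}$ under $\sigma$-continuous limits is elementary, the existence of a representation of an \emph{arbitrary} indecomposable continuum by metric indecomposable continua is delicate: a metric continuous image, and hence a spectrum factor, of an indecomposable continuum can easily be decomposable (e.g.\ an arc), so one cannot simply read the factors off a generic spectrum but must arrange them to lie in $\mathfrak{In}$. Establishing this reflection, and doing so simultaneously with the bound $D_{\mathcal K}(X_\sigma)\le D_{\mathcal K}(X)$, is where the real work lies; the hereditarily indecomposable case is handled directly by \cite[Proposition 4.2]{hp}.
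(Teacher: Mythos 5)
Your overall architecture differs from the paper's, and it contains a gap that you yourself flag but do not close, and which is in fact the heart of the proposition. Your first step assumes that an arbitrary (non-metrizable) indecomposable continuum can be written as the limit of a $\sigma$-continuous inverse system of \emph{metric indecomposable} continua with surjective bonding maps. But that representation is essentially the statement being proved: it is exactly Corollary 3.9 of the paper, which is derived \emph{from} this proposition, not available before it. So in the merely indecomposable case your argument begs the question, and nothing in your sketch supplies the missing reflection. (In the hereditarily indecomposable case your route is fine, since \cite[Proposition 4.2]{hp} gives the representation outright; and your verification that indecomposability passes to limits of surjective systems, via the two downward-closed subsets of the directed index set, is correct.)

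The paper closes this gap by never decomposing $X$ at all. Instead it factorizes the given map $f$ through a countable inverse sequence $X\to Z_n\to Y$ of compacta of weight $w(Y)$, alternating two devices: for odd $n$ it makes $Z_n$ indecomposable by reflecting Lewis's internal characterization of indecomposability (\cite[Theorem 1.3]{wl}: for all nonempty open $U,V$ there is a closed cover $X=A\cup B$ with $A\cap B\subset U$ and both $A,B$ meeting $V$) down to a factor of weight $w(Y)$ via the Hart--Pol elementary-substructure/L\"owenheim--Skolem technique; for even $n$ it applies the Marde\v{s}i\'c-type factorization theorem for extension dimension (\cite{lrs}, \cite{pa}) to force $D_{\mathcal K}(Z_n)\le D_{\mathcal K}(X)$. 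The limit $Z$ of the sequence then lies in $\mathfrak{In}$ and satisfies the dimension bound because each property holds on a cofinal subsequence. The ingredient you are missing is precisely a reflectable, first-order-style characterization of indecomposability such as Lewis's; without it, there is no way to arrange that the metric factors you want to start from actually lie in $\mathfrak{In}$.
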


\begin{proof}
K.P. Hart and E. Pol \cite[Proposition 4.2]{hp}, using
elementary substructures and L\"{o}wenheim-Skolem theorem, proved a similar factorization theorem for hereditarily indecomposable continua and the covering dimension $\dim$. The first part of the proposition, concerning indecomposable continua, can be obtained applying Hart-Pol's arguments and the following characterization of indecomposable continua \cite[Theorem 1.3]{wl}: A continuum $X$ is indecomposable if and only if for any nonempty open sets $U$ and $V$ in $X$, there exist closed sets $A$ and $B$ such that $X=A\cup B$, $A\cap B\subset U$, $A\cap V\neq\varnothing\neq B\cap V$. Using this fact and  existence of a factorization theorem for extension dimension (see \cite{lrs}, or \cite{pa}), we construct an inverse sequence $S=\{Z_n, p^m_n\}$ of compacta and  maps $g_n\colon X\to Z_n$, $h_n\colon Z_n\to Y$ satisfying the following conditions for all $k,n\in\mathbb N$:
\begin{itemize}
\item $w(Z_n)=w(Y)$;
\item $Z_{2k+1}$ is indecomposable (resp., hereditarily indecomposable);
\item $D_{\mathcal{K}}(Z_{2k})\leq D_{\mathcal{K}}(X)$;
\item $h_1\circ g_1=f$, $g_{n}=p^{n+1}_n\circ g_{n+1}$ and $h_{n+1}=h_n\circ p^{n+1}_n$.
\end{itemize}
Then $Z=\displaystyle\lim_{\leftarrow} S$ is a compactum of weight $w(Z)=w(Y)$ and $D_{\mathcal{K}}(Z)\leq D_{\mathcal{K}}(X)$. Moreover, the maps $g_n$ provide
a map $g\colon X\to Z$ such that $f=h\circ g$, where $h=h_1\circ p_1$ with $p_1$ being the projection from $Z$ onto $Z_1$. Finally, since $Z$ is the limit of the inverse system $\{Z_{2k+1}, p^{2m+1}_{2k+1}\}$ consisting of compacta from $\mathfrak{In}$, $Z$ is also from $\mathfrak{In}$.
\end{proof}

\begin{cor}
A compactum $X$ belongs to the class $\mathfrak{In}$ if and only if $X$ is the limit space of an inverse system
of metric compacta from $\mathfrak{In}$.
\end{cor}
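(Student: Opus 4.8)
The plan is to prove the two implications separately, using the preceding factorization proposition for the forward direction and a levelwise projection argument for the converse.

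For the forward implication, suppose $X\in\mathfrak{In}$. I would start from the standard representation of $X$ as the limit of the inverse system of all its metrizable quotients: let $\Sigma$ be the directed set of continuous surjections $f\colon X\to M_f$ onto metric compacta, preordered by $f\preceq g$ iff $f$ factors through $g$; the resulting bonding maps $\phi_{gf}\colon M_g\to M_f$ are surjective, and $X=\lim_{\leftarrow}\{M_f,\phi_{gf},\Sigma\}$ because the maps $f$ separate the points of $X$. Now for each $f\in\Sigma$ I apply the previous proposition to the surjection $f$: since $X\in\mathfrak{In}$, this yields a metric compactum $Z_f\in\mathfrak{In}$ together with a surjection $g_f\colon X\to Z_f$ and a map $h_f\colon Z_f\to M_f$ with $f=h_f\circ g_f$, so that $g_f\in\Sigma$ and $f\preceq g_f$. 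The key observation is that the subfamily $\Sigma'=\{g_f:f\in\Sigma\}$ is cofinal in $\Sigma$, since $f\preceq g_f$ for every $f$, and that it is directed, because an upper bound $k$ of $g_{f_1},g_{f_2}$ in $\Sigma$ satisfies $g_{f_i}\preceq k\preceq g_k\in\Sigma'$. A cofinal subsystem has the same limit, so $X=\lim_{\leftarrow}\{Z_f:g_f\in\Sigma'\}$ exhibits $X$ as the limit of an inverse system whose factors $Z_f$ are metric compacta from $\mathfrak{In}$, as required.

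For the converse, let $X=\lim_{\leftarrow}\{X_\sigma,\pi^\sigma_\rho,\Sigma\}$ with each $X_\sigma$ a metric compactum from $\mathfrak{In}$, and let $\pi_\sigma\colon X\to X_\sigma$ be the limit projections. The mechanism is the same in both cases: a decomposition into two proper subcontinua is detected at a single index. Suppose first $\mathfrak{In}$ is the class of hereditarily indecomposable compacta and let $K\subset X$ be a subcontinuum with $K=K_1\cup K_2$ for proper subcontinua $K_i$. Choosing $x_1\in K\setminus K_2$ and $x_2\in K\setminus K_1$ and separating each point from the disjoint compactum at some level, I find, using directedness, a $\sigma$ with $\pi_\sigma(x_2)\notin\pi_\sigma(K_1)$ and $\pi_\sigma(x_1)\notin\pi_\sigma(K_2)$; then $\pi_\sigma(K_1)$ and $\pi_\sigma(K_2)$ are proper subcontinua of $\pi_\sigma(K)$ with union $\pi_\sigma(K)$, contradicting the indecomposability of the subcontinuum $\pi_\sigma(K)$ of the hereditarily indecomposable $X_\sigma$. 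Hence every subcontinuum of $X$ is indecomposable. When $\mathfrak{In}$ is the class of indecomposable continua I take the bonding maps, hence the $\pi_\sigma$, to be surjective, so that $X$ is a continuum and $\pi_\sigma(X)=X_\sigma$; the same argument applied to a putative decomposition $X=K_1\cup K_2$ produces proper subcontinua $\pi_\sigma(K_1),\pi_\sigma(K_2)$ with union $X_\sigma$, contradicting the indecomposability of $X_\sigma$ itself.

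The routine parts are the standard facts that the metrizable quotients form a directed system with limit $X$, that cofinal subsystems preserve the limit, and that the limit of a surjective inverse system of continua is a continuum. The point requiring care is the surjectivity of the factoring map $g_f$ onto $Z_f$ in the forward direction: without it one would have to replace $Z_f$ by the image $g_f(X)$, which is harmless for hereditarily indecomposable compacta, where closed subsets stay in the class, but could fail to be indecomposable when $Z_f$ is only an indecomposable continuum. This is why I read the previous proposition as providing a surjection $g_f$, which is consistent with its inverse-sequence construction, since a limit of surjections between compact inverse sequences is again surjective. I expect this surjectivity bookkeeping, together with the set-theoretic justification that the metrizable quotients of $X$ form a set, to be the only genuinely delicate points.
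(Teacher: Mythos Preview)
The paper states this corollary without proof, treating it as an immediate consequence of the preceding factorization proposition; your argument is precisely the standard Marde\v{s}i\'{c}-type derivation one expects here and is correct. Your caveat about the surjectivity of $g_f$ is well placed: the factorization theorems invoked in the proof of the proposition (Hart--Pol, and the extension-dimension factorizations of Levin--Rubin--Schapiro and Pasynkov) do yield surjective intermediate maps, so each $g_n$ and hence the limit map $g$ is onto, and your reading is justified. Likewise, for the converse in the indecomposable-continuum case your assumption of surjective bonding maps is the intended one (and is automatic for the system produced in the forward direction), so the biconditional holds as stated.
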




\begin{thebibliography}{999}

\bibitem{ps1} P.~S.~Alexandroff, \textit{On the dimension of normal
spaces}, Proc. Roy. Soc. London \textbf{ 189} (1947), 11--39.

\bibitem{ps} P.~S.~Alexandroff, \textit{Die Kontinua $(V^p)$ - eine
Versch\"{a}rfung der Cantorschen Mannigfaltigkeiten}, Monatshefte
fur Math. \textbf{61} (1957), 67--76 (German).

\bibitem{b}
R.~H.~Bing, \textit{Higher-dimensional hereditarily indecomposable continua}, Trans.
Amer. Math. Soc. \textbf{71} (1951), 267--273


\bibitem{ch:03}
A.~Chigogidze, \textit{Extraordinary dimension theories generated by
complexes}, Topology Appl. \textbf{138} (2004), 1--20.

\bibitem{cv1}
A.~Chigogidze and V.~Valov \textit{The extension dimension and $C$-spaces},
Bull. London Math. Soc \textbf{34} (2002), 708--716.

\bibitem{cv}
A.~Chigogidze and V.~Valov, \textit{Extension dimension and refinable maps},
Acta Math. Hungar. \textbf{92(3)} (2001), 185--194.



\bibitem{dr} T.~Dobrowolski and L.~Rubin, \textit{The hyperspaces of
infinite-dimensional compacta for covering and cohomological
dimension are homeomorphic}, Pacif. J.Math. \textbf{164} (1994),
15--39.

\bibitem{d} A.~Dranishnikov, \textit{On a problem of P.S. Alexandrov},
 Matem. Sbornik \textbf{135} (1988), 551--557.




\bibitem{re:95} R.~Engelking, \textit{Theory of
dimensions: Finite and Infinite}, Heldermann Verlag, Lemgo, 1995.



\bibitem{h} N.~Had\v{z}iivanov, \textit{Strong Cantor manifolds},
C. R. Acad. Bulgare Sci. \textbf{30} (1977), 1247--1249 (Russian).


\bibitem{ht} N.~Had\v{z}iivanov and V. Todorov, \textit{On
non-Euclidean manifolds}, C. R. Acad. Bulgare Sci. \textbf{33}
(1980), 449--452 (Russian).


\bibitem{hs} N.~Had\v{z}iivanov and E.~Shchepin, \textit{Cohomologies of
countable unions of closed sets with applications to Cantor
manifolds}, Annuaire Univ. Sofia Fac. Math. Inform. \textbf{87}
(1993), 249--255 (1999).

\bibitem{hp}
K.~P.~Hart and E.~Pol, \textit{On hereditarily indecomposable compacta and factorization of maps},
Houston J. Math. \textbf{37} (2011), no. 2, 637--644.

\bibitem{hmp}
K.~P.~Hart, J. van Mill and R.~Pol, \textit{Remarks on hereditarily indecomposable continua},
Topol. Proc. \textbf{25} (2000), 179--206 (2002).



\bibitem{hm} W.~Hurewicz and K. Menger, \textit{Dimension und
Zusammenhangsstuffe}, Math. Ann. \textbf{100} (1928), 618--633
(German).



\bibitem{kktv}
A.~Karassev, P.~Krupski, V.~Todorov and V.~Valov, \textit{Generalized Cantor manifolds and homogeneity},
Houston J. Math., to appear.

\bibitem{kv}
P.~Krupski and V.~Valov, \textit{Mazurkiewicz manifolds and homogeneity}, Rocky Mount.
J. Math. \textbf{41} (2011), no.6, 1--6.

\bibitem{kr1}
P.~Krupski, Private communication, 2007.

\bibitem{kru} P.~Krupski, \textit{Homogeneity and Cantor manifolds},
Proc. Amer. Math. Soc. \textbf{109} (1990), 1135--1142.


\bibitem{Ku1} K. Kuratowski, \textit{Topology}, vol. II, Academic Press,
New York; PWN-Polish Scientific Publishers, Warsaw, 1968.

\bibitem{ku} V.~Kuz'minov, \textit{On $V^n$ continua}, Dokl. Akad. Nauk
SSSR \textbf{139} (1961), 24--27 (Russian).

\bibitem{le}
M.~Levin, \textit{Bing maps and finite-dimensional compact metric spaces}, Fund. Math. \textbf{151} (1996), 47--52.

\bibitem{lrs}
M.~Levin, L.~Rubin and P.~Schapiro, \textit{The Mardešic factorization theorem for extension theory and $C$C-separation},
Proc. Amer. Math. Soc. \textbf{128} (2000), no. 10, 3099–3106.

\bibitem{wl}
W.~Lewis, \textit{The pseudo-arc}, Bol. Soc. Mat. Mexicana \textbf{5} (1999), 25--77.


\bibitem{ma}
S.~Mazurkiewicz, \textit{Sur les ensembles de dimension faible}, Fund. Math. \textbf{13} (1929), 210--217.



\bibitem{pa}
B.~Pasynkov, \textit{A factorization theorem for the cohomological dimensions of mappings}
(Russian) Vestnik Moskov. Univ. (1991), no. 4, 26--33, 103; translation in Moscow Univ. Math. Bull. \textbf{46} (1991), no. 4, 26–31.

\bibitem{es:98}
E.~Shchepin, \textit{Arithmetic of dimension theory}, Russian Math.
Surveys \textbf{53} (1998), 975--1069.


\bibitem{vt}
V.~Todorov, \textit{On a Mazurkiewicz theorem}, Serdica Math. J. \textbf{6} (1980), no. 3, 240--249.


\bibitem{tu} L.~A.~Tumarkin, \textit{Sur la structure dimensionnelle
des ensembles ferm\'es}, C.R. Acad. Paris \textbf{186} (1928),
420--422.

\bibitem{tu1} L.~A.~Tumarkin, \textit{On infinite-dimensional Cantor manifolds},
Dokl. Akad. Nauk SSSR \textbf{115} (1957), 244--246 (Russian).

\bibitem{u} P.~Urysohn, \textit{Memoire sur les multiplicites
cantoriennes}, Fund. Math. \textbf{7} (1925), 30--137.

\bibitem{vv}
V.~Valov, \textit{Parametric Bing and Krasinkiewicz maps: revisited},
Proc. Amer. Math. Soc. \textbf{139} (2011), no. 2, 747--756.

\end{thebibliography}
\end{document}